\newtheorem{theorem}{Theorem}[section]
\newtheorem{proposition}[theorem]{Proposition}
\theoremstyle{definition}
\newtheorem{example}[theorem]{Example}
\numberwithin{equation}{section}
\DeclareMathOperator{\norm}{norm}
\newcommand{\lf}{\left\lfloor}
\newcommand{\rf}{\right\rfloor}
\begin{document}

\title{Periodicity in the $p$-adic valuation of a polynomial}

\author{Luis A. Medina}
\address{Departament of  Mathematics, University of  Puerto Rico, Rio 
Piedras, San Juan, PR 00936-8377}
\email{luis.medina17@upr.edu}

\author{Victor H. Moll}
\address{Department of Mathematics,
Tulane University, New Orleans, LA 70118}
\email{vhm@tulane.edu}

\author{Eric Rowland}
\address{University of Liege \\
D\'epartement de Math\'ematiques \\
4000 Li\`ege, Belgium}
\curraddr{
	Department of Mathematics \\
	Hofstra University \\
	Hempstead, NY
}
\email{eric.rowland@hofstra.edu}

%    General info
\subjclass[2010]{Primary 11B83, Secondary 11Y55, 11S05}

\date{March 28, 2017}

\keywords{valuations, polynomial sequences, Hensel's lemma, $p$-adic integers}

\begin{abstract}
For a prime $p$ and an integer $x$, the $p$-adic valuation of $x$ is denoted by $\nu_{p}(x)$.  For a
polynomial $Q$ with 
integer coefficients, the sequence of valuations $\nu_{p}(Q(n))$ is shown to be either periodic or unbounded. The first 
case corresponds to the situation where $Q$ has no roots in the ring of $p$-adic integers.  In the periodic situation, the period length is determined.
\end{abstract}

\maketitle

%%%%%%%%%%%%%%%%%%%%%%%%%%%%%%%%%%%%%%%%%%
% Introduction
%%%%%%%%%%%%%%%%%%%%%%%%%%%%%%%%%%%%%%%%%%

\section{Introduction}
\label{section introduction}

For $p$ prime and $n \in \mathbb{N}$, the exponent of the highest power of $p$ that divides
$n$ is called the \textit{$p$-adic valuation of $n$}. This is denoted by 
$\nu_{p}(n)$. Given a function $f: \mathbb{N} \to \mathbb{N}$, the 
study of sequences $\nu_{p}(f(n))$ goes back to at least  Legendre~\cite{legendre-1830a}, who established the classical formula 
\begin{equation}
\nu_{p}(n!) = \sum_{k=1}^{\infty} \lf \frac{n}{p^{k}} \rf = \frac{n - s_{p}(n)}{p-1},
\end{equation}
where $s_{p}(n)$ is the sum of the digits of $n$ in base $p$. 

The work presented here  forms part of a general project 
to analyze the sequence
\begin{equation*}
V_{x} = \{ \nu_{p}(x_{n}): n \in \mathbb{N} \}
\end{equation*}
for given sequence $ x = \{ x_{n} \}$.  Valuations have been studied for the Stirling 
numbers $S(n,k)$~\cite{amdeberhan-2008b, berribeztia-2010a}, sequences satisfying first-order recurrences~\cite{amdeberhan-2009a}, the Fibonacci numbers~\cite{medinal-2015b}, the ASM (alternating sign matrices) 
numbers~\cite{beyerstedt-2011a,sunx-2009a}, and coefficients of a polynomial 
connected to a quartic integral~\cite{amdeberhan-2008a,
boros-2001c,sunx-2010a}. Other results of this type 
appear in \cite{amdeberhan-2013h,castro-2015a,cohenh-1999a,
cohn-1999a,straub-2009a}.

Consider the  sequence of valuations 
\begin{equation}
V_{p}(Q) = \{ \nu_{p}(Q(n)): n \in \mathbb{N} \},
\end{equation}
for a prime $p$ and a polynomial $Q \in \mathbb{Z}[x]$.   The polynomial $Q$ is assumed to be irreducible over 
$\mathbb{Z}$; otherwise the identity 
\begin{equation}
V_{p}(Q_{1}Q_{2}) = V_{p}(Q_{1})+ V_{p}(Q_{2})
\end{equation}
can be used to express $V_{p}(Q)$ in terms of its irreducible factors. The first result established in this paper is
 that $V_{p}(Q)$ is 
either periodic or unbounded (Theorem~\ref{theorem criteria}). In the case of a periodic sequence, the period length is explicitly determined (Theorem~\ref{theorem period rational}).  The special 
case of quadratic polynomials is discussed in detail in Sections~\ref{section quadratic2} and \ref{section quadratic odd}.

The analysis includes the $p$-adic numbers $\mathbb{Q}_{p}$ and the ring of integers $\mathbb{Z}_{p}$. Recall that 
each $x \in \mathbb{Q}_{p}$ can be expressed in the form 
\begin{equation}
x = \sum_{k=k_{0}}^{\infty} c_{k}p^{k}
\end{equation}
with $0 \leq c_{k} \leq p - 1$ and $c_{k_{0}} \neq 0$.

The $p$-adic integers $\mathbb{Z}_{p}$ correspond to the 
case $k_{0} \geq 0$, and invertible elements in this ring have $k_{0} = 0$. The set of invertible elements is denoted by $\mathbb{Z}_{p}^{\times}$.
The $p$-adic absolute value of $x \in \mathbb{Q}_{p}$ is defined by $|x|_{p} = p^{-k_{0}}$. In particular, $x \in 
\mathbb{Z}_{p}^{\times}$ if and only if $x \in \mathbb{Z}_{p}$ and $|x|_{p} = 1$.

The determination of the sequence $V_{p}(Q)$ will require examining the irreducibility of $Q$ in $\mathbb{Z}_{p}[x]$. Some 
classical criteria are stated below.

\begin{theorem}[{Eisenstein criterion~\cite[Proposition~$5.3.11$]{gouvea-1997a}}]
Let $f(x) = a_{n}x^{n} + \cdots + a_{1}x + a_{0} \in \mathbb{Z}_{p}[x]$. Assume 
\begin{enumerate}
\item
$\nu_{p}(a_{n}) = 0$, 
\item
$\nu_{p}(a_{j}) > 0 \text{ for } 0 \leq j < n$, and
\item
$\nu_{p}(a_{0}) = 1$.
\end{enumerate}
Then $f$ is irreducible in $\mathbb{Z}_{p}[x]$.
\end{theorem}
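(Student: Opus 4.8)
The plan is to argue by contradiction, reducing modulo $p$ and invoking unique factorization in $\mathbb{F}_p[x]$. Suppose $f = gh$ with $g, h \in \mathbb{Z}_p[x]$ and neither $g$ nor $h$ a unit of $\mathbb{Z}_p[x]$; recall that the units of $\mathbb{Z}_p[x]$ are exactly the elements of $\mathbb{Z}_p^{\times}$. First I would rule out the possibility that one of the factors, say $g$, is constant: in that case $g \in \mathbb{Z}_p \setminus \mathbb{Z}_p^{\times}$, so $\nu_p(g) \geq 1$, and since $g$ then divides every coefficient of $f$, in particular $a_n$, this contradicts hypothesis (1). Hence both $g$ and $h$ have degree at least $1$.

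Next I would reduce the equation $f = gh$ modulo $p\mathbb{Z}_p$, denoting by $\bar{f}, \bar{g}, \bar{h}$ the resulting polynomials in $\mathbb{F}_p[x]$. Hypotheses (1) and (2) say that $a_n$ is a unit while $a_0, \dots, a_{n-1}$ are not, so $\bar{f} = \bar{a}_n x^n$ with $\bar{a}_n \neq 0$. Since the leading coefficient of $f$ survives reduction, so do those of $g$ and $h$, and therefore $\deg \bar{g} = \deg g \geq 1$ and $\deg \bar{h} = \deg h \geq 1$. Now in the unique factorization domain $\mathbb{F}_p[x]$ the only irreducible dividing $\bar{a}_n x^n$ is $x$, so the factorization $\bar{f} = \bar{g}\,\bar{h}$ forces each of $\bar{g}$ and $\bar{h}$ to be a nonzero constant times a positive power of $x$. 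In particular the constant terms satisfy $g(0) \equiv 0$ and $h(0) \equiv 0 \pmod{p}$, that is, $\nu_p(g(0)) \geq 1$ and $\nu_p(h(0)) \geq 1$.

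Comparing constant terms in $f = gh$ then gives $a_0 = g(0)h(0)$, whence $\nu_p(a_0) = \nu_p(g(0)) + \nu_p(h(0)) \geq 2$, contradicting hypothesis (3). This contradiction shows that no such factorization exists, so $f$ is irreducible in $\mathbb{Z}_p[x]$.

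The argument is short and I do not anticipate a genuine obstacle; the one point deserving attention is the double role of hypothesis (1). It is used both to exclude constant factors and to guarantee that reduction modulo $p$ does not lower the degrees of $g$ and $h$, and it is precisely the preservation of these degrees that allows the unique-factorization step to conclude that \emph{each} factor is a power of $x$, rather than merely that their product is.
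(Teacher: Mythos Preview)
Your argument is correct and is the standard textbook proof of the Eisenstein criterion over a discrete valuation ring. There is nothing to compare against here: the paper does not supply its own proof of this statement but simply quotes it from Gouv\^{e}a's book~\cite[Proposition~5.3.11]{gouvea-1997a} as background. Your write-up would serve perfectly well as a self-contained justification, and the care you take in explaining why hypothesis~(1) forces $\deg \bar g = \deg g$ and $\deg \bar h = \deg h$ (via the product of leading coefficients being a unit) is exactly the point where hasty expositions sometimes leave a gap.
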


\begin{theorem}[{Hensel's lemma, polynomial version~\cite[Theorem~$3.4.6$]{gouvea-1997a}}]
Let $f \in \mathbb{Z}_{p}[x]$ and assume there are non-constant 
polynomials $g, \, h \in \mathbb{Z}_{p}[x]$, such that 
\begin{enumerate}
\item
$g$ is monic, 
\item
$g$ and $h$ are coprime modulo $p$, and
\item
$f(x) \equiv g(x) h(x) \mod p$. 
\end{enumerate} 
Then $f$ is reducible in $\mathbb{Z}_{p}[x]$.
\end{theorem}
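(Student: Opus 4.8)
The statement is classical, and the plan is to prove it by Hensel lifting: beginning from the factorization modulo $p$, one refines it successively to factorizations modulo $p^{2}, p^{3}, \dots$, and then passes to the $p$-adic limit to produce an honest factorization of $f$ in $\mathbb{Z}_{p}[x]$.

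First I would extract the algebraic input of hypothesis (2). Because $g$ and $h$ are coprime modulo $p$, their reductions in $\mathbb{F}_{p}[x]$ generate the unit ideal, so there exist $a, b \in \mathbb{Z}_{p}[x]$ with $a g + b h \equiv 1 \pmod{p}$. This B\'ezout relation is what makes every individual lifting step solvable.

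Then comes the inductive construction. Put $g_{1} = g$ and $h_{1} = h$. Suppose $g_{k}, h_{k} \in \mathbb{Z}_{p}[x]$ have already been produced with $g_{k}$ monic, $\deg g_{k} = \deg g$, $g_{k} \equiv g \pmod{p}$, $h_{k} \equiv h \pmod{p}$, and $f \equiv g_{k} h_{k} \pmod{p^{k}}$. Writing $f - g_{k} h_{k} = p^{k} r_{k}$ with $r_{k} \in \mathbb{Z}_{p}[x]$, I would seek corrections $g_{k+1} = g_{k} + p^{k} u_{k}$ and $h_{k+1} = h_{k} + p^{k} v_{k}$. Expanding the product and discarding the term divisible by $p^{2k}$ shows that the requirement $f \equiv g_{k+1} h_{k+1} \pmod{p^{k+1}}$ is equivalent to the congruence $u_{k} h_{k} + v_{k} g_{k} \equiv r_{k} \pmod{p}$. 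Since $g_{k} \equiv g$ and $h_{k} \equiv h$ modulo $p$, multiplying the B\'ezout relation by $r_{k}$ gives the particular solution $u_{k} = b\, r_{k}$, $v_{k} = a\, r_{k}$; dividing $u_{k}$ by the monic polynomial $g_{k}$ and absorbing the quotient into $v_{k}$, one may further assume $\deg u_{k} < \deg g_{k}$, which forces $g_{k+1}$ to be monic of the same degree $\deg g$ and, by a routine induction on a degree bound for $h_{k}$, keeps $\deg h_{k+1}$ bounded uniformly in $k$. By construction $g_{k+1} \equiv g_{k}$ and $h_{k+1} \equiv h_{k}$ modulo $p^{k}$.

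Finally I would pass to the limit. Since $g_{k+1} \equiv g_{k} \pmod{p^{k}}$ and the degrees remain bounded, each coefficient sequence is $p$-adically Cauchy, so $G = \lim_{k} g_{k}$ and $H = \lim_{k} h_{k}$ exist in $\mathbb{Z}_{p}[x]$, with $G$ monic of degree $\deg g$, $G \equiv g \pmod{p}$, $H \equiv h \pmod{p}$, and $f = GH$. As $g$ is non-constant so is $G$; and since $G$ is monic, $\deg H = \deg f - \deg g$, which is positive — this is precisely where one uses that the reduction of $h$ modulo $p$ is non-constant, for then $\deg f \geq \deg(gh \bmod p) = \deg g + \deg(h \bmod p) \geq \deg g + 1$. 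Hence $f = GH$ exhibits $f$ as a product of two non-constant polynomials in $\mathbb{Z}_{p}[x]$, so $f$ is reducible. I expect the main obstacle to be the degree bookkeeping in the lifting step — keeping $g_{k}$ monic of constant degree while controlling $\deg h_{k}$ — together with the verification that the cofactor $H$ is genuinely non-constant; once these are in place, the convergence and the identity $f = GH$ are routine.
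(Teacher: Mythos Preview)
The paper does not prove this statement; it is quoted as a background result with a citation to Gouv\^{e}a's textbook. So there is no ``paper's proof'' to compare against, and your proposal should be judged on its own.

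Your argument is the standard Hensel lifting proof and is essentially correct: set up the B\'ezout relation modulo $p$, inductively solve the linear congruence $u_{k}h_{k}+v_{k}g_{k}\equiv r_{k}\pmod p$ for the correction terms, normalize $u_{k}$ by Euclidean division by the monic $g_{k}$ so that $g_{k+1}$ stays monic of fixed degree, and pass to the $p$-adic limit coefficientwise. That is exactly the argument in Gouv\^{e}a.

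One point deserves care. In your final paragraph you write ``this is precisely where one uses that the reduction of $h$ modulo $p$ is non-constant.'' But the hypothesis as stated in the paper says only that $h\in\mathbb{Z}_{p}[x]$ is non-constant, not that $h\bmod p$ is. If $h\bmod p$ is allowed to be a nonzero constant, the conclusion can fail (e.g.\ $f=x$, $g=x$, $h=1+px$ satisfy all three hypotheses yet $f$ is irreducible). In Gouv\^{e}a's formulation the polynomials $g,h$ are really specified by their reductions, so ``non-constant'' is meant modulo $p$; your reading is the correct one, but you should make explicit that you are interpreting the hypothesis that way, since the literal statement in the paper is ambiguous on this point.
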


\begin{theorem}[Dumas Irreducibility Criterion~\cite{dumas-1906a}]
Let $f \in \mathbb{Z}_{p}[x]$ be given by 
\begin{equation}
f(x)=a_0 x^n+a_{1}x^{n-1}+\cdots+a_{n-1}x+a_n. 
\end{equation}
 Suppose that
\begin{enumerate}
\item $\nu_p(a_0)=0$,
\item $\nu_p(a_i)/i > \nu_p(a_n)/n$ for $1\leq i \leq n-1$, and 
\item $\gcd(\nu_p(a_n),n)=1$.
\end{enumerate}
Then $f$ is irreducible in $\mathbb{Z}_p[x]$.
\end{theorem}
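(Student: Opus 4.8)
The plan is to derive the result from the theory of $p$-adic Newton polygons. First I would move to a more convenient normalization: since $\nu_p(a_0)=0$, the leading coefficient is a unit of $\mathbb{Z}_p$, so $f$ is primitive and $\tilde f:=a_0^{-1}f$ is monic with coefficients in $\mathbb{Z}_p$; by Gauss's lemma $f$ is irreducible in $\mathbb{Z}_p[x]$ if and only if $\tilde f$ is irreducible in $\mathbb{Q}_p[x]$, so it suffices to prove the latter. The case $n=1$ is trivial, so I may assume $n\ge 2$, and then hypothesis~(3) already forces $\nu_p(a_n)\ge 1$.

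Next I would read off the Newton polygon of $\tilde f$, i.e.\ the lower convex hull of the points $(n-i,\nu_p(a_i))$ for $0\le i\le n$. Its endpoints are $(0,\nu_p(a_n))$ and $(n,0)$, and hypothesis~(2) says precisely that every other point lies strictly above the segment joining these two. Hence the Newton polygon of $\tilde f$ is a single segment of slope $-\nu_p(a_n)/n$. The standard Newton polygon theorem then asserts that, for the unique extension of $\nu_p$ to a fixed algebraic closure $\overline{\mathbb{Q}_p}$, every root $\alpha$ of $\tilde f$ satisfies $\nu_p(\alpha)=\nu_p(a_n)/n$.

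The last step uses hypothesis~(3). Fix a root $\alpha$, and let $e$ and $f_0$ be the ramification index and residue degree of $\mathbb{Q}_p(\alpha)/\mathbb{Q}_p$, so that $[\mathbb{Q}_p(\alpha):\mathbb{Q}_p]=ef_0$. On one hand $ef_0\le n$, since $\alpha$ is a root of the degree-$n$ polynomial $\tilde f$. On the other hand $\nu_p$ takes values in $\tfrac1e\mathbb{Z}$ on $\mathbb{Q}_p(\alpha)$, so $\nu_p(a_n)/n\in\tfrac1e\mathbb{Z}$; combined with $\gcd(\nu_p(a_n),n)=1$ this forces $n\mid e$. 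Therefore $e=n$ and $f_0=1$, so $[\mathbb{Q}_p(\alpha):\mathbb{Q}_p]=n=\deg\tilde f$, which means $\tilde f$ is the minimal polynomial of $\alpha$ (up to the monic normalization) and hence irreducible over $\mathbb{Q}_p$. Unwinding the normalization gives the theorem.

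The main obstacle is the appeal to the Newton polygon theorem relating the slopes of the polygon to the valuations of the roots. If one prefers a self-contained argument, I would instead start from a hypothetical factorization $\tilde f=gh$ with $g,h$ monic in $\mathbb{Z}_p[x]$ of positive degrees $r$ and $s=n-r$: reduction modulo $p$ (hypothesis~(2) makes every non-leading coefficient of $\tilde f$ divisible by $p$) forces $\bar g=x^r$ and $\bar h=x^s$ in $\mathbb{F}_p[x]$, and then a Newton-polygon or successive-approximation argument shows $\nu_p(g(0))=r\,\nu_p(a_n)/n$, which cannot be an integer unless $n\mid r$, contradicting $1\le r\le n-1$. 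In either approach the crux is to pin down $\nu_p(g(0))$ (equivalently, the valuations of the roots) exactly rather than merely showing it is positive, and that is exactly where hypothesis~(3) enters.
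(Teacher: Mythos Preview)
The paper does not give its own proof of this statement: the Dumas criterion is listed in the introduction as one of several classical irreducibility results, with a citation to Dumas's 1906 paper, and is then used as a black box in Example~6.1. So there is nothing in the paper to compare your argument against.

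That said, your proof is correct and is essentially the standard modern route to Dumas's criterion. The computation that hypothesis~(2) places every interior point strictly above the chord from $(0,\nu_p(a_n))$ to $(n,0)$ is exactly right, so the Newton polygon is a single segment of slope $-\nu_p(a_n)/n$, and the Newton polygon theorem gives $\nu_p(\alpha)=\nu_p(a_n)/n$ for every root. The ramification argument is clean: from $\nu_p(\alpha)\in\tfrac{1}{e}\mathbb{Z}$ and $\gcd(\nu_p(a_n),n)=1$ you get $n\mid e$, while $e\le ef_0=[\mathbb{Q}_p(\alpha):\mathbb{Q}_p]\le n$ forces $e=n$ and hence irreducibility. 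Your remark that the case $n=1$ is trivial and that hypothesis~(3) then forces $\nu_p(a_n)\ge 1$ for $n\ge 2$ is a useful sanity check. The alternative factorization sketch you offer is also the right idea if one wants to avoid quoting the Newton polygon theorem, though as you note the work still lies in pinning down $\nu_p(g(0))$ exactly, which amounts to reproving the one-segment case of that theorem.
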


%%%%%%%%%%%%%%%%%%%%%%%%%%%%%%%%%%%%%%%%%%
% Section:  Boundedness of the sequence $V_{p}(Q)$.
%%%%%%%%%%%%%%%%%%%%%%%%%%%%%%%%%%%%%%%%%%

\section{Boundedness of the sequence $V_{p}(Q)$.}
\label{section boundedness}

This section characterizes the boundedness of the sequence $V_{p}(Q)$ in terms of the existence of zeros of the 
polynomial $Q$ in $\mathbb{Z}_{p}$.  Bell~\cite{bellj-2007a} showed that $V_{p}(Q)$ is periodic in the case that $Q$ has no zeros in $\mathbb{Z}_p$ and gave a bound for the minimal period length.

\begin{theorem}
\label{theorem criteria}
Let $p$ be a prime and $Q \in \mathbb{Z}[x]$. Then $V_{p}(Q)$ is either 
periodic or unbounded. Moreover, $V_{p}(Q)$ is periodic if and only if 
$Q$ has no zeros in $\mathbb{Z}_{p}$.  In the periodic case, the minimal period length is 
a power of $p$.
\end{theorem}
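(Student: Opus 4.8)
The plan is to argue by cases according to whether $Q$ has a zero in $\mathbb{Z}_p$, and to extract periodicity from a compactness/lifting argument. First I would handle the easy direction: suppose $Q$ has a zero $\alpha \in \mathbb{Z}_p$. Since $\mathbb{Z}$ is dense in $\mathbb{Z}_p$, there are integers $n$ with $|n-\alpha|_p$ arbitrarily small, and then $\nu_p(Q(n))$ is large because $Q(n) = Q(n) - Q(\alpha)$ is divisible by $n - \alpha$ in $\mathbb{Z}_p$ (indeed $\nu_p(Q(n)) \geq \nu_p(n-\alpha)$ after dividing by the unit-or-higher factor coming from $Q'$, with care taken when $\alpha$ is a multiple root). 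Hence $V_p(Q)$ is unbounded, which shows periodicity implies no zeros in $\mathbb{Z}_p$.

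For the converse, assume $Q$ has no zeros in $\mathbb{Z}_p$. The key point is that the function $n \mapsto \nu_p(Q(n))$ extends to a continuous function on the compact space $\mathbb{Z}_p$: if $n \equiv m \pmod{p^k}$ then $Q(n) \equiv Q(m) \pmod{p^k}$, so $\nu_p(Q(n))$ is determined mod $p^k$ as long as it is $< k$. Since $Q$ has no root in $\mathbb{Z}_p$, the value $|Q(x)|_p$ is bounded below on $\mathbb{Z}_p$ (a nonvanishing continuous function on a compact set), say $|Q(x)|_p \geq p^{-M}$ for all $x \in \mathbb{Z}_p$; equivalently $\nu_p(Q(x)) \leq M$ everywhere. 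This gives boundedness of $V_p(Q)$. Moreover, once $\nu_p(Q(n)) \leq M$ for all $n$, the congruence $Q(n) \equiv Q(m) \pmod{p^{M+1}}$ forces $\nu_p(Q(n)) = \nu_p(Q(m))$; hence $\nu_p(Q(n))$ depends only on $n \bmod p^{M+1}$, so the sequence is periodic with period dividing $p^{M+1}$.

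Finally, for the statement that the minimal period is a power of $p$, I would observe that any eventual period must divide $p^{M+1}$ by the previous paragraph, and the sequence is purely periodic (not merely eventually periodic) since the residue-class argument applies to all $n \in \mathbb{N}$; therefore the minimal period divides $p^{M+1}$ and hence is itself a power of $p$. I expect the main obstacle to be making the "unbounded" direction fully rigorous at a multiple root $\alpha$ of $Q$: there one cannot simply say $\nu_p(Q(n)) = \nu_p(n-\alpha)$, and one must instead use the factorization $Q(x) = (x-\alpha)^e R(x)$ over $\mathbb{Z}_p$ (or over a finite extension, then descend) with $R(\alpha) \neq 0$, and check that approximating $\alpha$ by integers still drives the valuation to infinity; a clean way around this is to invoke that a nonzero $Q$ has only finitely many roots, split off the repeated factors, and reduce to the simple-root case via $Q'$ or via Hensel-type lifting. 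The compactness argument in the periodic direction is standard once phrased in terms of $|\cdot|_p$ being bounded away from $0$.
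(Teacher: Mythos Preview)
Your proposal is correct and follows essentially the same argument as the paper: compactness of $\mathbb{Z}_p$ gives a uniform bound $M$ on $\nu_p(Q(n))$ when $Q$ has no $p$-adic root, and then the congruence $Q(n+p^{M+1}) \equiv Q(n) \pmod{p^{M+1}}$ yields periodicity with period dividing $p^{M+1}$, hence a power of $p$. Your worry about multiple roots in the unbounded direction is unnecessary: for any root $\alpha \in \mathbb{Z}_p$ (simple or not) one has the factorization $Q(x) = (x-\alpha)Q_1(x)$ with $Q_1 \in \mathbb{Z}_p[x]$, so $\nu_p(Q(n)) \geq \nu_p(n-\alpha)$ immediately, which is exactly how the paper handles it.
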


\begin{proof}
Assume that $Q$ has no zeros in $\mathbb{Z}_{p}$. If $V_{p}(Q)$ is not bounded
there exists a sequence $n_{j} \to \infty$ such that $\nu_{p}(Q(n_{j})) 
\to \infty$. The compactness of $\mathbb{Z}_{p}$ (see \cite{murtyr-2002a}) 
gives a subsequence converging to $n_{\infty} \in \mathbb{Z}_{p}$. Then 
$Q(n_{\infty})$ is divisible by arbitrary large powers of $p$, thus 
$Q(n_{\infty}) = 0$. This contradiction shows 
$V_{p}(Q)$ is bounded. In order to show $V_{p}(Q)$ is periodic, define
\begin{equation}
d = \sup \left\{ k \, : \, p^{k} \text{ divides } Q(n) \text{ for some } 
n \in \mathbb{Z} \right\}.
\end{equation}
Then $d \geq  0 $ and 
\begin{equation}
Q(n + p^{d+1}) = Q(n) + Q'(n)p^{d+1} + O(p^{d+2}).
\end{equation}
Since $\nu_{p}(Q(n)) \leq d$, it follows that 
\begin{equation}
\nu_{p} \left( Q(n + p^{d+1}) \right) 
= \nu_{p}(Q(n)),
\end{equation}
proving that $\nu_{p}(Q(n))$ is periodic. The minimal period length is a
divisor of $p^{d+1}$, thus a power of the prime $p$.

On the other hand, if $Q$ has a zero $x= \alpha$ in $\mathbb{Z}_{p}$, 
\begin{equation}
Q(x) = (x - \alpha)Q_{1}(x), \text{ with  } Q_{1} \in \mathbb{Z}_{p}[x]. 
\end{equation}
Then $\nu_{p}(Q(n)) \geq \nu_{p}(n - \alpha)$, and $V_{p}(Q)$ is unbounded.
\end{proof}

The most basic result for establishing the existence of a zero of a polynomial in $\mathbb{Z}_{p}$ is Hensel's lemma~\cite[Theorem~3.4.1]{gouvea-1997a}. In the following form, 
it states that a  simple root of a polynomial modulo $p$ has a unique lifting to a root in 
$\mathbb{Z}_{p}$.

\begin{theorem}[Hensel's lemma]
If $f \in \mathbb{Z}[x]$ and $a \in \mathbb{Z}_{p}$ satisfies 
\begin{equation}
f(a) \equiv 0 \mod p \text{ and } f'(a) \not \equiv 0 \mod p
\end{equation}
\noindent
then there is a unique $\alpha \in \mathbb{Z}_{p}$ such that 
$f(\alpha) = 0$ and $\alpha \equiv a \mod p$.
\end{theorem}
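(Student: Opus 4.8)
The plan is to construct a root of $f$ by Newton-type successive approximation, starting from $a$ and refining modulo increasing powers of $p$. First I would set $a_{1} = a$ and build a sequence $a_{1}, a_{2}, a_{3}, \dots$ of $p$-adic integers with $a_{k+1} \equiv a_{k} \pmod{p^{k}}$ and $f(a_{k}) \equiv 0 \pmod{p^{k}}$ for every $k \geq 1$. The inductive step is the heart of the argument: assuming $a_{k}$ has been found, I look for $a_{k+1}$ of the form $a_{k} + t\,p^{k}$ with $t \in \mathbb{Z}_{p}$. Since $f$ has integer coefficients, the finite Taylor expansion $f(a_{k} + t p^{k}) = f(a_{k}) + f'(a_{k})\, t\, p^{k} + \sum_{j \geq 2} \frac{f^{(j)}(a_{k})}{j!}\, t^{j} p^{jk}$ has all of its coefficients $\frac{f^{(j)}(a_{k})}{j!}$ in $\mathbb{Z}_{p}$, so modulo $p^{k+1}$ only the first two terms survive. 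Writing $f(a_{k}) = p^{k} b_{k}$ with $b_{k} \in \mathbb{Z}_{p}$, the requirement $f(a_{k+1}) \equiv 0 \pmod{p^{k+1}}$ reduces to $b_{k} + f'(a_{k})\, t \equiv 0 \pmod{p}$. Because $a_{k} \equiv a \pmod{p}$ we have $f'(a_{k}) \equiv f'(a) \not\equiv 0 \pmod{p}$, so $f'(a_{k})$ is invertible modulo $p$ and $t$ is uniquely determined modulo $p$; fixing the representative in $\{0, 1, \dots, p-1\}$ yields $a_{k+1}$.

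The sequence $(a_{k})$ then satisfies $|a_{k+1} - a_{k}|_{p} \leq p^{-k}$, hence is Cauchy, and by completeness of $\mathbb{Z}_{p}$ it converges to some $\alpha \in \mathbb{Z}_{p}$ with $\alpha \equiv a_{1} = a \pmod{p}$. Since polynomial evaluation is continuous for the $p$-adic absolute value, $f(\alpha) = \lim_{k \to \infty} f(a_{k})$; and $|f(a_{k})|_{p} \leq p^{-k} \to 0$, so $f(\alpha) = 0$, producing the desired root.

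For uniqueness, suppose $\alpha$ and $\beta$ are both roots of $f$ in $\mathbb{Z}_{p}$ congruent to $a$ modulo $p$. Factoring through the Taylor expansion, $0 = f(\alpha) - f(\beta) = (\alpha - \beta)\bigl(f'(\beta) + (\alpha - \beta)\, r\bigr)$ for some $r \in \mathbb{Z}_{p}$. Since $\alpha - \beta \equiv 0 \pmod{p}$ and $f'(\beta) \equiv f'(a) \not\equiv 0 \pmod{p}$, the second factor is a unit in $\mathbb{Z}_{p}$, hence nonzero, forcing $\alpha = \beta$; alternatively, one shows by induction that any such root must agree with $a_{k}$ modulo $p^{k}$ for all $k$, using the uniqueness of $t$ at each stage. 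The one point requiring care is the valuation bookkeeping in the Taylor expansion — namely that the quadratic and higher terms contribute valuation at least $2k \geq k+1$ — which is precisely where the hypothesis $f \in \mathbb{Z}[x]$, guaranteeing $f^{(j)}/j! \in \mathbb{Z}[x]$ and hence $f^{(j)}(a_{k})/j! \in \mathbb{Z}_{p}$, is used.
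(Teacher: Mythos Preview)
Your argument is the standard Newton-iteration proof of Hensel's lemma and is correct: the inductive lifting step, the Cauchy convergence in $\mathbb{Z}_{p}$, and the uniqueness via the unit factor $f'(\beta) + (\alpha-\beta)r$ are all handled properly, including the key observation that $f^{(j)}/j! \in \mathbb{Z}[x]$ so the higher-order Taylor terms carry valuation at least $2k \geq k+1$.

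Note, however, that the paper does not actually supply its own proof of this statement: Hensel's lemma is quoted as a classical result with a reference to Gouv\^{e}a's textbook \cite[Theorem~3.4.1]{gouvea-1997a}, so there is no in-paper argument to compare against. Your proof is essentially the same as the one found in that reference, so in that sense it matches the cited approach.
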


The following extension appears in \cite[Lemma~3.1]{cassels-1986a}.

\begin{proposition}
\label{proposition Hensel}
Assume  $f \in \mathbb{Z}[x]$ and $a \in \mathbb{Z}_{p}$ satisfies 
\begin{equation}
\nu_{p}(f(a)) > 2 \nu_{p}(f'(a)).
\end{equation}
Then  there is $\alpha \in \mathbb{Z}_{p}$ with $f(\alpha) = 0$ and $\alpha \equiv a \mod p$.
\end{proposition}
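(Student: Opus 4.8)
The plan is to prove this quantitative version of Hensel's lemma by a Newton iteration argument, tracking the $p$-adic valuations carefully. Write $k = \nu_p(f'(a))$ and assume $\nu_p(f(a)) > 2k$. Define the sequence $a_0 = a$ and $a_{m+1} = a_m - f(a_m)/f'(a_m)$, and show by induction that each $a_m$ lies in $\mathbb{Z}_p$, that $\nu_p(f'(a_m)) = k$ for all $m$ (so the division makes sense and stays in $\mathbb{Z}_p$), that $a_{m+1} \equiv a_m \bmod p^{\,k+1}$ (in particular $a_{m+1} \equiv a \bmod p$), and crucially that $\nu_p(f(a_m))$ grows: specifically $\nu_p(f(a_{m+1})) \geq 2\,\nu_p(f(a_m)) - 2k$, which under the hypothesis $\nu_p(f(a_0)) > 2k$ forces $\nu_p(f(a_m)) \to \infty$.

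First I would record the Taylor expansion $f(a_m + t) = f(a_m) + f'(a_m)\,t + \binom{f''}{2}(a_m)\,t^2 + \cdots$ with coefficients in $\mathbb{Z}_p$, applied at $t = -f(a_m)/f'(a_m)$. Setting $v_m = \nu_p(f(a_m))$, the hypothesis $v_m > 2k$ guarantees $\nu_p(t) = v_m - k > k \geq 0$, so $t \in \mathbb{Z}_p$ and $a_{m+1} \in \mathbb{Z}_p$ with $a_{m+1} \equiv a_m \bmod p^{v_m - k}$. The linear and constant terms cancel by construction, leaving $f(a_{m+1})$ equal to a sum of terms each divisible by $t^2$, hence $v_{m+1} \geq 2(v_m - k) = 2v_m - 2k$. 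Since $v_0 - 2k \geq 1$, an easy induction gives $v_m - 2k \geq 2^m$, so $v_m \to \infty$. For the derivative, expand $f'(a_{m+1}) = f'(a_m) + f''(a_m)\,t + \cdots$; since $\nu_p(t) = v_m - k > k = \nu_p(f'(a_m))$, the valuation of $f'(a_{m+1})$ equals that of $f'(a_m)$, namely $k$, so the iteration never breaks down.

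Next I would check that $\{a_m\}$ is Cauchy in $\mathbb{Z}_p$: from $a_{m+1} \equiv a_m \bmod p^{v_m - k}$ and $v_m \to \infty$, the sequence converges to some $\alpha \in \mathbb{Z}_p$ (using completeness and the fact that $\mathbb{Z}_p$ is closed). Continuity of the polynomial $f$ then gives $f(\alpha) = \lim f(a_m)$, and since $\nu_p(f(a_m)) \to \infty$ we get $|f(a_m)|_p \to 0$, hence $f(\alpha) = 0$. Finally, since every $a_m \equiv a_0 = a \bmod p$ (because each increment has valuation $v_m - k \geq k + 1 \geq 1$), passing to the limit yields $\alpha \equiv a \bmod p$, completing the proof.

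The main obstacle I anticipate is bookkeeping rather than conceptual: one must verify the invariant $\nu_p(f'(a_m)) = k$ simultaneously with the growth of $\nu_p(f(a_m))$, since each depends on the other through the strict inequality $v_m - k > k$. Once the induction hypothesis is set up to carry both facts together, the Taylor-expansion estimates are routine. A secondary point worth stating cleanly is that the proposition only asserts \emph{existence} of a root congruent to $a$ mod $p$ (not uniqueness), which is exactly what the iteration delivers without further effort; the strict inequality $\nu_p(f(a)) > 2\nu_p(f'(a))$, as opposed to $\geq$, is precisely what is needed to make $v_0 - 2k \geq 1$ so that the doubling genuinely increases the valuation.
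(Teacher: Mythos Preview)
Your Newton-iteration argument is correct and complete: the simultaneous induction on $\nu_p(f'(a_m))=k$ and $v_m>2k$ is exactly what is needed, the doubling estimate $v_{m+1}\geq 2v_m-2k$ forces $v_m-2k\geq 2^m$, and the Cauchy/continuity conclusion is routine. Note, however, that the paper does not supply its own proof of this proposition; it is stated with a reference to Cassels' \emph{Local Fields} (Lemma~3.1 there) and simply quoted as a known result, so there is no in-paper argument to compare against. Your proof is the standard one for this strengthened Hensel lemma and is essentially what one finds in Cassels.
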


%%%%%%%%%%%%%%%%%%%%%%%%%%%%%%%%%%%%%%%%%%
% Section:  Quadratic polynomials and the prime $p=2$
%%%%%%%%%%%%%%%%%%%%%%%%%%%%%%%%%%%%%%%%%%

\section{Quadratic polynomials and the prime $p=2$}
\label{section quadratic2}

Let $a \in \mathbb{Z}$ and $Q_{a}(x) = x^{2}-a$.
This section considers the periodicity of the sequence $\{ \nu_{2}(n^{2}-a) \}$.
In view of Theorem~\ref{theorem criteria}, this is equivalent to the existence of a zero of $Q_{a}$ in $\mathbb{Z}_{2}$.
An elementary proof of Proposition~\ref{proposition bd}
appears in \cite{byrnes-2015a}. Define $c$ and $\mu(a)$ by 
\begin{equation}
a = 4^{\mu(a)} c 
\label{definition c}
\end{equation}
with $c \not \equiv 0 \mod 4$. 

\begin{proposition}
\label{proposition bd}
The polynomial $Q_{a}$ has no zeros in $\mathbb{Z}_{2}$ if and only if 
$c \not \equiv 1 \mod 8$.
\end{proposition}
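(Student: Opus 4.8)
The plan is to use Theorem~\ref{theorem criteria} to translate the statement: $\{\nu_2(n^2-a)\}$ is periodic exactly when $Q_a$ has no zero in $\mathbb{Z}_2$, and $Q_a(\alpha)=0$ for some $\alpha\in\mathbb{Z}_2$ if and only if $a$ is a square in $\mathbb{Z}_2$. (The normal form $a=4^{\mu(a)}c$ already presupposes $a\neq 0$; the case $a=0$ is trivial since then $Q_0(x)=x^2$ has the zero $0$.) So the proposition reduces to the classical criterion: a nonzero $2$-adic integer $a$, written $a=2^k u$ with $u\in\mathbb{Z}_2^{\times}$, is a square in $\mathbb{Z}_2$ if and only if $k$ is even and $u\equiv 1\bmod 8$. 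I would prove this criterion and then match it against the given normal form.

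For the criterion itself, the forward direction is elementary. If $a=\beta^2$ with $\beta=2^j v$, $v\in\mathbb{Z}_2^{\times}$, then $k=2j$ is even and $u=v^2$; since $v$ is odd it is congruent to one of $1,3,5,7$ modulo $8$, and each of these squares to $1$ modulo $8$, so $u\equiv 1\bmod 8$. For the converse, write $k=2m$ and apply Proposition~\ref{proposition Hensel} to $f(x)=x^2-u$ at $x=1$: we have $f'(1)=2$, so $\nu_2(f'(1))=1$, while $f(1)=1-u$ satisfies $\nu_2(f(1))\geq 3$ because $u\equiv 1\bmod 8$; hence $\nu_2(f(1))>2\,\nu_2(f'(1))$, and there is $\alpha\in\mathbb{Z}_2$ with $\alpha^2=u$. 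Then $a=(2^m\alpha)^2$ is a square.

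It remains to translate this into the language of $c$ and $\mu(a)$. Since $c\not\equiv 0\bmod 4$, either $c$ is odd or $\nu_2(c)=1$. If $\nu_2(c)=1$, then $\nu_2(a)=2\mu(a)+1$ is odd, so $a$ is not a square and $Q_a$ has no zero in $\mathbb{Z}_2$; moreover $c$ is even, so $c\not\equiv 1\bmod 8$, and both sides of the claimed equivalence hold. If $c$ is odd, then $\nu_2(a)=2\mu(a)$ is even and the unit part of $a$ is exactly $c$, so by the criterion $Q_a$ has a zero in $\mathbb{Z}_2$ if and only if $c\equiv 1\bmod 8$; equivalently, $Q_a$ has no zero if and only if $c\not\equiv 1\bmod 8$. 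In every case, $Q_a$ has no zero in $\mathbb{Z}_2$ precisely when $c\not\equiv 1\bmod 8$.

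The one genuinely delicate point is the appeal to Hensel's lemma at the prime $2$: the naive version fails here because $f'(x)=2x$ vanishes modulo $2$ everywhere, so one must invoke the refined statement of Proposition~\ref{proposition Hensel} with the inequality $\nu_p(f(a))>2\nu_p(f'(a))$. This is also precisely why the relevant threshold is ``$\equiv 1\bmod 8$'' rather than a congruence modulo $2$ or $4$. Everything else is routine bookkeeping with $2$-adic valuations.
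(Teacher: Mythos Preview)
Your argument is correct. Both you and the paper use Proposition~\ref{proposition Hensel} at $x=1$ to show that $c\equiv 1\bmod 8$ forces a zero of $Q_a$ in $\mathbb{Z}_2$. The difference lies in the converse direction. The paper treats $c\not\equiv 1\bmod 8$ by a case split: for $c$ odd it computes $\nu_2(n^2-a)$ explicitly over the residues $3,5,7\bmod 8$, observes the sequence is bounded, and invokes Theorem~\ref{theorem criteria} to conclude there is no zero; for the remaining even cases $c\equiv 2,6\bmod 8$ it appeals to the Eisenstein criterion. You instead argue directly that any square in $\mathbb{Z}_2$ must have even valuation and unit part $\equiv 1\bmod 8$ (since an odd $v$ satisfies $v^2\equiv 1\bmod 8$), so no such $a$ can be a square. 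Your route is shorter and avoids both the explicit valuation tables and Eisenstein; the paper's route, on the other hand, already exhibits the periodic values that feed into the later period-length results. One minor remark: your opening reference to Theorem~\ref{theorem criteria} is unnecessary here, since the proposition is stated directly in terms of zeros in $\mathbb{Z}_2$, not periodicity; the reduction you actually use is just that $Q_a$ has a zero in $\mathbb{Z}_2$ if and only if $a$ is a square there.
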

\begin{proof}
Assume first that $Q_{a}$ has no zeros in $\mathbb{Z}_{2}$ and $c \equiv 1 
\mod 8$. If $a$ is odd, then $a = c = 1 + 8j$ with $j \in \mathbb{Z}$. Then 
$Q_{a}(1) = 1 - a = -8j$ and 
\begin{equation}
|Q_{a}(1)|_{2} \leq \tfrac{1}{8} \text{ and } |Q_{a}'(1)|_{2} = \tfrac{1}{2}.
\end{equation}
Therefore $|Q_{a}(1)|_{2} < \left( |Q_{a}'(1)|_{2} \right)^{2}$ and Proposition~\ref{proposition Hensel} 
produces $\alpha \in \mathbb{Z}_{2}$ with $Q_{a}(\alpha)=0$.
This is a contradiction.

 In the case $a$ even, write $a = 4^{i}(1+8j)$ with $i > 0$ and $i \in 
\mathbb{Z}$. The previous case shows the existence of 
$\alpha \in \mathbb{Z}_{2}$ with $\alpha^{2} = (1 + 8j)$. Then 
$\beta = 2^{i} \alpha$ satisfies $Q_{2}(\beta)= 0$, yielding a contradiction.

\smallskip

Assume now that $c \not \equiv 1 \mod 8$.  If $a$ is odd, then $a=c$ and 
$a \equiv 3, \, 5, \, 7 \mod 8$. A simple calculation shows that 
\begin{equation}
\nu_{2}(n^{2} - 8i-3) = \nu_{2}(n^{2}-8i-7)  = \begin{cases} 
 1 & \text{if $n$ is odd} \\
 0 & \text{if $n$ is even},
\end{cases} 
\end{equation}
and 
\begin{equation}
\nu_{2}(n^{2} - 8i-5) = \begin{cases} 
 2 & \text{if $n$ is odd} \\
 0 & \text{if $n$ is even}.
\end{cases} 
\end{equation}
For these values of $a$, the sequence $V_{2}(Q)$ is bounded. Theorem~\ref{theorem criteria} now 
shows that $Q_{a}$ has no zeros in $\mathbb{Z}_{2}$. 

If $a$ is even, then it can be written as $a = 4^{j}(8i+r)$ with $j \geq 0$ and $r = 
2, \, 3, \, 5, \, 6, \, 7$. The excluded 
case $r=4$ can be reduced to one of the residues listed above by consideration of the parity of 
the index $i$.  Now suppose $Q_{a}(x)$ has a zero $\beta \in 
\mathbb{Z}_{2}$; that is, $\beta^{2} = a = 4^{j}(8i+r)$. Then 
$\alpha = \beta/2^{j} \in \mathbb{Z}_{2}$ satisfies $\alpha^{2} = 8i+r$. 
Each of these cases lead to a contradiction. Indeed, if $r =  3, \, 5, \, 7$
the valuations $\nu_{2}(n^{2}-8i-r)$ are bounded contradicting 
Theorem~\ref{theorem criteria}. In the remaining two cases, the polynomial $x^{2}-8i-r$ 
is irreducible over $\mathbb{Z}_{2}$ by a direct application 
of the Eisenstein criterion. Therefore $Q_{a}(x)$ has no zeros. This 
concludes the proof.
\end{proof}

The previous result is now restated in terms of periodicity. The explicit period length is given in Section 
\ref{section general}.

\begin{theorem}
\label{theorem periodic-11}
Let $Q(x) = x^{2}-a$. Define $c$ by the relation $a = 4^{\mu(a)}c$, with $c \not \equiv 0 \mod 4$. Then the sequence 
$V_{2}(Q)$ is periodic if and only if $c \not \equiv 1 \mod 8$. 
\end{theorem}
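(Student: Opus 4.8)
The plan is to obtain this as an immediate corollary of two results already established. Apply Theorem~\ref{theorem criteria} with the prime $p = 2$ and the polynomial $Q(x) = x^2 - a = Q_a(x)$, which lies in $\mathbb{Z}[x]$: it asserts that $V_2(Q)$ is periodic if and only if $Q_a$ has no zeros in $\mathbb{Z}_2$. Then invoke Proposition~\ref{proposition bd}, which says precisely that $Q_a$ has no zeros in $\mathbb{Z}_2$ if and only if the unit part $c$ in the factorization $a = 4^{\mu(a)} c$ (with $c \not\equiv 0 \pmod 4$) satisfies $c \not\equiv 1 \pmod 8$. Chaining the two equivalences gives the statement, so the proof is essentially one line.

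A couple of bookkeeping remarks deserve a sentence. First, for $a \neq 0$ the factorization $a = 4^{\mu(a)} c$ with $c \not\equiv 0 \pmod 4$ determines $\mu(a)$ and $c$ uniquely — write $\nu_2(a) = 2\mu(a) + \epsilon$ with $\epsilon \in \{0,1\}$ — so the hypothesis is unambiguous (the case $a = 0$, where $Q_0(x) = x^2$ has the zero $0$ and the factorization degenerates, is excluded). Second, Theorem~\ref{theorem criteria} carries no irreducibility hypothesis, so it applies to $Q_a$ even when $a$ is a perfect square; in that situation $Q_a$ has a zero in $\mathbb{Z} \subseteq \mathbb{Z}_2$, and correspondingly $c \equiv 1 \pmod 8$ since a square unit of $\mathbb{Z}_2$ lies in $1 + 8\mathbb{Z}_2$, so the two statements remain consistent. (Similarly no sign hypothesis on $a$ is needed, as $c$ may be negative.)

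I do not anticipate any genuine obstacle: all of the real work has already been carried out, the substantive content being absorbed into Proposition~\ref{proposition bd} (the Hensel-lifting step via Proposition~\ref{proposition Hensel} in the odd case, and the explicit bounded-valuation computations together with the Eisenstein criterion in the even case). The present theorem merely translates that non-vanishing criterion into the language of periodicity supplied by Theorem~\ref{theorem criteria}.
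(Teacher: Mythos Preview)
Your proposal is correct and matches the paper's own treatment: the paper presents Theorem~\ref{theorem periodic-11} simply as a restatement of Proposition~\ref{proposition bd} in the language of periodicity, invoking Theorem~\ref{theorem criteria}, exactly as you do. Your additional bookkeeping remarks (uniqueness of the factorization, the $a=0$ case, no irreducibility hypothesis needed) are accurate and go slightly beyond what the paper makes explicit.
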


Combining Theorem~\ref{theorem criteria},  Proposition~\ref{proposition bd}, and the 
classical result of Lagrange on representations of integers as sums of 
squares shows that the sequence of valuations $\{ \nu_{2}(n^{2}+b): n \in \mathbb{N} \}$ is 
bounded if and only if $b$ cannot be written as a sum of three squares.

%%%%%%%%%%%%%%%%%%%%%%%%%%%%%%%%%%%%%%%%%%
% Section:  Quadratic polynomials and an odd prime
%%%%%%%%%%%%%%%%%%%%%%%%%%%%%%%%%%%%%%%%%%

 \section{Quadratic polynomials and an odd prime}
\label{section quadratic odd}

This section extends the results of Section~\ref{section quadratic2} to the case of odd primes.  

\begin{theorem}
\label{theorem odd prime}
Let $p \neq 2$ be a prime, and let $a \in \mathbb{Z}$ with $k = \nu_p(a)$. The
sequence $\nu_p(n^2-a)$ is periodic if and only if $k$ is odd or $a/p^k$ is a quadratic non-residue
modulo $p$. If it is periodic,  its period length is $p^{\lceil k/2 \rceil}$.
\end{theorem}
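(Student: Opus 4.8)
The plan is to treat the two halves of the equivalence separately, using Theorem~\ref{theorem criteria} to reduce periodicity to the non-existence of a zero of $Q(x)=x^{2}-a$ in $\mathbb{Z}_{p}$, and then to analyze that existence question through the standard structure of squares in $\mathbb{Z}_{p}^{\times}$. Writing $a=p^{k}u$ with $u\in\mathbb{Z}_{p}^{\times}$, a zero $\beta\in\mathbb{Z}_{p}$ of $Q$ satisfies $\nu_{p}(\beta^{2})=\nu_{p}(a)=k$, so $k$ must be even, say $k=2m$, and then $\alpha=\beta/p^{m}$ is a unit with $\alpha^{2}=u$. Thus $Q$ has a zero in $\mathbb{Z}_{p}$ if and only if $k$ is even and $u=a/p^{k}$ is a square in $\mathbb{Z}_{p}^{\times}$. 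The first step is therefore to record the classical fact (a one-line consequence of Hensel's lemma, since $f'(x)=2x\not\equiv 0\bmod p$ at a unit when $p$ is odd) that a unit $u\in\mathbb{Z}_{p}^{\times}$ is a square in $\mathbb{Z}_{p}$ if and only if its reduction modulo $p$ is a quadratic residue in $\mathbb{F}_{p}^{\times}$. Combining these two observations gives: $Q$ has a zero in $\mathbb{Z}_{p}$ iff $k$ is even and $a/p^{k}$ is a quadratic residue mod $p$; negating and invoking Theorem~\ref{theorem criteria} yields exactly the stated periodicity criterion.

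For the period-length claim in the periodic case, the second step is to compute $d=\sup\{\,\nu_{p}(Q(n)) : n\in\mathbb{Z}\,\}$ explicitly, since the argument in the proof of Theorem~\ref{theorem criteria} shows the minimal period divides $p^{d+1}$ and is a power of $p$. When $k$ is odd: for any $n$, either $\nu_{p}(n)\ge\lceil k/2\rceil=(k+1)/2$, forcing $\nu_{p}(n^{2})\ge k+1>k=\nu_{p}(a)$ and hence $\nu_{p}(Q(n))=k$; or $\nu_{p}(n)\le (k-1)/2$, forcing $\nu_{p}(n^{2})$ even and $<k$, hence $\nu_{p}(Q(n))=\nu_{p}(n^{2})\le k-1$. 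Either way $\nu_{p}(Q(n))\le k$, and the value $k$ is attained (take $n=p^{(k+1)/2}$), so $d=k$; one then checks directly that $n\mapsto \nu_{p}(Q(n))$ already has period $p^{\lceil k/2\rceil}=p^{(k+1)/2}$ by showing $\nu_{p}(Q(n+p^{(k+1)/2}))=\nu_{p}(Q(n))$ via $Q(n+p^{(k+1)/2})=Q(n)+2np^{(k+1)/2}+p^{k+1}$ and casework on $\nu_p(n)$. When $k=2m$ is even and $a/p^{k}$ is a non-residue mod $p$: if $\nu_{p}(n)\neq m$ then $\nu_{p}(n^{2})\neq k$ and $\nu_{p}(Q(n))=\min(2\nu_{p}(n),k)\le k$; if $\nu_{p}(n)=m$, write $n=p^{m}t$ with $t\in\mathbb{Z}_{p}^{\times}$, so $Q(n)=p^{k}(t^{2}-a/p^{k})$ and $t^{2}-a/p^{k}\not\equiv 0\bmod p$ because $a/p^{k}$ is a non-residue, giving $\nu_{p}(Q(n))=k$. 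Hence again $d=k$ and the minimal period divides $p^{k+1}$; the sharper bound $p^{\lceil k/2\rceil}=p^{m+1}$ follows by the same shift computation $Q(n+p^{m+1})=Q(n)+2np^{m+1}+p^{2m+2}$ together with the casework on whether $\nu_{p}(n)\le m$ or $\nu_{p}(n)\ge m+1$.

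The remaining point — and the one requiring the most care — is to show the period is not merely a divisor of $p^{\lceil k/2\rceil}$ but exactly $p^{\lceil k/2\rceil}$, i.e.\ that $p^{\lceil k/2\rceil - 1}$ is not a period. The third step handles this by exhibiting two integers $n_{1}\equiv n_{2}\bmod p^{\lceil k/2\rceil-1}$ with $\nu_{p}(Q(n_{1}))\neq\nu_{p}(Q(n_{2}))$: take $n_{1}=0$, which gives $\nu_{p}(Q(0))=\nu_{p}(a)=k$, and take $n_{2}=p^{\lceil k/2\rceil-1}$, for which $\nu_{p}(n_{2}^{2})=2\lceil k/2\rceil-2$ equals $k-1$ when $k$ is odd and $k-2$ when $k$ is even, so in both cases $\nu_{p}(n_{2}^{2})<k$ and therefore $\nu_{p}(Q(n_{2}))=\nu_{p}(n_{2}^{2})<k=\nu_{p}(Q(n_{1}))$. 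Thus no shift by a proper divisor of $p^{\lceil k/2\rceil}$ is a period, and since every period is a power of $p$, the minimal period is exactly $p^{\lceil k/2\rceil}$. I expect the main obstacle to be bookkeeping rather than conceptual: organizing the casework on $\nu_{p}(n)$ cleanly in both parity cases, and correctly pinning down the sharp exponent $\lceil k/2\rceil$ (as opposed to $d+1=k+1$) by the explicit shift computation and the separating pair above.
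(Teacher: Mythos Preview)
Your overall strategy matches the paper's: reduce periodicity to the non-existence of a root in $\mathbb{Z}_p$ via Theorem~\ref{theorem criteria}, identify squares in $\mathbb{Z}_p^\times$ by Hensel, and then pin down the period by a direct shift computation together with the separating pair $n_1=0$, $n_2=p^{\lceil k/2\rceil-1}$. The odd-$k$ case and the minimality step are essentially the paper's argument.

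There is, however, a genuine gap in the even case $k=2m$. You write $p^{\lceil k/2\rceil}=p^{m+1}$, but $\lceil 2m/2\rceil=m$. Your shift by $p^{m+1}$ does correctly show that $p^{m+1}$ is a period, while your minimality step (where you switch to the correct value $\lceil k/2\rceil-1=m-1$) only rules out $p^{m-1}$; together these leave the minimal period undetermined between $p^{m}$ and $p^{m+1}$. The repair is not a mere change of exponent: with the shift by $p^{m}$, in the range $\nu_p(n)\ge m$ all three terms $Q(n)$, $2np^{m}$, $p^{2m}$ have valuation exactly $2m$, so a bare ultrametric comparison is inconclusive. One must invoke the non-residue hypothesis directly: writing $a=p^{2m}a_0$ and $n+p^{m}=p^{m}s$, the point is that $s^{2}-a_0$ is a $p$-adic unit for \emph{every} $s\in\mathbb{Z}_p$ because $a_0$ is a non-residue, which is precisely what the paper verifies in its Cases~2 and~3. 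Once that is in place the period divides $p^{m}$, and your step~3 (with the correct exponent $m-1$) finishes the proof; note also that the minimality argument tacitly assumes $\lceil k/2\rceil\ge 1$, the case $k=0$ being trivial since then $\nu_p(n^2-a)=0$ for all $n$.
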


\begin{proof}
Let $p\neq 2$. Hensel's lemma shows that an integer $a$ not divisible by $p$ has a square root in $\mathbb{Z}_p$ if 
and only if $a$ is a quadratic residue modulo $p$.  This implies that $a \in  \mathbb{Q}_p$ is a square if and
 only if it can be 
written as $a = p^{2m}u^2$ with $m \in \mathbb{Z}$ and $u \in \mathbb{Z}_p^{\times}$ a 
$p$-adic unit.  Then  $x^2-a$ has a zero in 
$\mathbb{Z}_p$ is equivalent to $k$ being even and $a/p^{k}$ being a quadratic residue modulo $p$.  This proves the
 first part of the theorem.

Now assume that  $\nu_p(n^2-a)$ is periodic. It is shown that its period length is given by $p^{\lceil k/2 \rceil}$. Suppose first 
that $k$ is odd.  Let $k_{*}= (k + 1)/2$ so that $\lceil k/2\rceil=k_{*}$ and 
\begin{equation}
\nu_p((n+p^{k_{*}})^2-a)=\nu_p(n^2-a+2p^{k_{*}} n +p^{2 k_{*}}).
\end{equation}
It is shown that 
\begin{equation}
\nu_p(2p^{k_{*}} n + p^{2 k_{*}}) > \nu_p(n^2-a),
\end{equation}
which implies $\nu_p((n+p^{k_{*}})^2-a) = \nu_p(n^2-a)$. Write $n = p^{\nu_{p}(n)} n_0$ and 
$a = p^{2 k_{*}+1}a_0$. Finally, let $\gamma= \min(\nu_{p}(n),k_{*})$. Then
\begin{eqnarray}
\nu_p\left(p^{k_{*}}(2n+p^{k_{*}})\right) &\geq& k_{*}+\min(\nu_p(2n),k_{*}) \\ \nonumber
&=&k_{*}+\gamma \\ \nonumber
&>& k_{*} + \gamma +\nu_p(p^{2\nu_{p}(n)-k_{*}-\gamma}n_0^2-p^{k_{*}-1-\gamma}a_0) \\ \nonumber
&=&\nu_p(p^{2\nu_{p}(n)}n_0^2-p^{2k_{*}-1}a_0) \\ \nonumber
&=&\nu_p(n^2-a)
\end{eqnarray}
since $0>\nu_p(p^{2\nu_{p}(n)-k_{*}-\gamma}n_0^2-p^{k_{*}-1-\gamma}a_0)$. To justify this last inequality, observe that 
if $\nu_{p}(n) \geq k_{*}$ then $2\nu_{p}(n)-k_{*}-\gamma=2(\nu_{p}(n)-k_{*})\geq 0$ and $k_{*}-1-\gamma=-1<0$, and if 
 $\nu<k_{*}$ then $2\nu-k_{*}-\gamma=\nu- k_{*}<0$ and $k_{*}-1-\gamma\geq 0$. 

Suppose now that $k$ is even and $a/p^{k}$ a quadratic non-residue.  Then, there is $m \in \mathbb{N}_{0}$ and 
$a_0 \in \mathbb{Z}$ such that $a=p^{2m}a_0$ with $a_0$ a quadratic non-residue modulo $p$.  It is now shown that
\begin{equation}
 \nu_p((n+p^m)^2-a)=\nu_p(n^2-a)
 \label{rule m}
 \end{equation}
  and that $p^m$ is minimal with this property.  If $m=0$, then \eqref{rule m} becomes 
  $\nu_{p}((n+1)^{2}) = \nu_{p}(n^{2}-a)$. Both sides vanish since  $a$ is a quadratic non-residue 
  modulo $p$.  Now, for $m>0$, the statement \eqref{rule m} becomes 
\begin{eqnarray*}
(n+p^m)^2-a&=&n^2+2np^m+p^{2m}-p^{2m}a_0.
\end{eqnarray*}
The proof of \eqref{rule m} is divided into cases. In the argument given below, it is assumed that $\gcd(n,n_{0}) = 1$. \\

\noindent
\textit{Case 1:}  Suppose that $n=p^{\beta}n_0$ with $\beta,n_0\in \mathbb{Z}$ and $\beta<m$.  Observe that 
$$\nu_p(n^2-a)=\nu_p(p^{2\beta}-p^{2m}a_0)=2\beta$$
and 
$$\nu_p(2p^mn+p^{2m})=\beta+m>2\beta.$$
Then  $\nu_p((n+p^m)^2-a)=\nu_p(n^2-a)$ as claimed. \\

\noindent
\textit{Case 2:} Suppose that $n=p^mn_0$ with $n_0 \in \mathbb{Z}$.  Note that
$$\nu_p(n^2-a)=\nu_p(p^{2m}(n_0^2-a_0))=2m,$$
where the last equality follows from the fact that $p$ does not divide $n_0^2-a_0$, since $a_0$ is a quadratic non-residue modulo $p$.  On the other hand,
\begin{eqnarray*}
\nu_p((n+p^m)^2-a)&=&\nu_p(p^{2m}n_0^2+2p^{2m}n_0+p^{2m}-p^{2m}a_0)\\
&=&\nu_p(p^{2m}[n_0^2+2n_0+1-a_0])\\
&=&\nu_p(p^{2m}[(n_0+1)^2-a_0])\\
&=&2m.
\end{eqnarray*}
This gives \eqref{rule m}. \\

\noindent
\textit{Case 3:}  Finally, suppose that $n=p^{\beta}n_0$ with $\beta,n_0\in \mathbb{Z}$ and $\beta>m$.  It is easy to see
 that $\nu_p(n^2-a)=2m$. Then 
\begin{eqnarray*}
(n+p^m)-a&=&n^2+2p^mn+p^{2m}-p^{2m}a_0\\
&=&p^{2\beta}n_0^2+2p^{m+\beta}n_0+p^{2m}-p^{2m}a_0\\
&=&p^{2m}(p^{2\beta-2m}n_0^2+2p^{\beta-m}+(1-a_0)).
\end{eqnarray*}
Now $1 -a_{0} \not \equiv 0 \mod p$ since $a_{0}$ is a quadratic non-residue. Therefore  $p$ does not divide $1-a_0$ 
and \eqref{rule m}  follows. 

\smallskip

The conclusion is that $\nu_p((n+p^{\lceil k/2 \rceil})^2-a)=\nu_p(n^2-a)$ for every $n \in \mathbb{N}$. Therefore, the 
period length is a divisor of $p^{\lceil k/2 \rceil}$.  The period length cannot be smaller, since for $n=0$
$$\nu_p((n+p^i)^2-a)=\nu_p(p^{2i}-a)=2i\neq k=\nu_p(-a)=\nu_p(n^2-a).$$
This completes the proof.
\end{proof}

%%%%%%%%%%%%%%%%%%%%%%%%%%%%%%%%%%%%%%%%%%
% Section:  The sequence $V_{p}(Q)$ for a general irreducible polynomial
%%%%%%%%%%%%%%%%%%%%%%%%%%%%%%%%%%%%%%%%%%

\section{The sequence $V_{p}(Q)$ for a general polynomial}
\label{section general}

This section extends the results described in the previous two sections to the more general case of an 
arbitrary prime $p$ and an arbitrary polynomial in $\mathbb{Z}_p[x]$.

Let $Q \in \mathbb{Z}_p[x]$.  The $p$-adic Weierstrass preparation theorem~\cite[Theorem~6.2.6]{gouvea-1997a} implies the existence of a factorization $Q(x)=p^m u Q_1(x)H(x)$ where $Q_1(x)$ is a monic polynomial with coefficients 
in $\mathbb{Z}_p$, $u\in \mathbb{Z}_p^{\times}$, $m$ is an integer, and $H(x)$ is a series that converges in 
$\mathbb{Z}_p$ with the property that $\nu_p(H(x))=0$ for every $x \in \mathbb{Z}_p$.  Therefore, 
\begin{equation}
V_p(Q) = \{\nu_p(Q(n)) : n\in \mathbb{N}\} 
\end{equation}
is a shift of $V_p(Q_1)$, showing that the general case can be reduced to the case when $Q(x)$ is a monic polynomial.

\begin{theorem}
\label{theorem period rational}
Let $Q \in \mathbb{Z}[x]$ be a monic polynomial of degree $d \geq 2$, irreducible over $\mathbb{Z}_{p}$.  Let 
$\alpha \geq 0$ be the largest non-negative integer such that $Q(x) \equiv 0 \mod p^\alpha$ for some $x \in \mathbb{Z}$. Then
$V_{p}(Q)$ is periodic with period length $p^{\lceil \alpha/d \rceil}$.
\end{theorem}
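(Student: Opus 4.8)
The plan is to mimic the structure of the proof of Theorem~\ref{theorem odd prime}, replacing the explicit quadratic computations with a Taylor-expansion argument driven by the irreducibility of $Q$ over $\mathbb{Z}_p$. The first step is to record what irreducibility buys us about the valuation of $Q$ at $p$-adic integers. Since $Q$ is monic, irreducible over $\mathbb{Z}_p$, and has degree $d\geq 2$, it has no root in $\mathbb{Z}_p$ (a root would give a linear factor), so by Theorem~\ref{theorem criteria} the sequence $V_p(Q)$ is indeed periodic and bounded; in particular the integer $\alpha$ in the statement is well-defined. Moreover I want the stronger fact that the function $x\mapsto \nu_p(Q(x))$ on $\mathbb{Z}_p$ is \emph{constant} on suitable cosets, and that $\nu_p(Q(x))$ is controlled by the distance of $x$ to the roots of $Q$ in an extension. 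Concretely, writing $Q(x)=\prod_{i=1}^d (x-\theta_i)$ with the $\theta_i$ conjugate algebraic integers in a totally ramified extension $K/\mathbb{Q}_p$ of degree $d$ (this is exactly where irreducibility of the monic $Q$ over $\mathbb{Z}_p$ enters, via the theory of Newton polygons / the fact that an irreducible polynomial over a local field defines a single discrete valuation), I expect each $\nu_p(x-\theta_i)$ to be a rational number in $\frac{1}{d}\mathbb{Z}$, and $\nu_p(Q(x))=\sum_i \nu_p(x-\theta_i)$.

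The second step is the key periodicity estimate. Let $k=\lceil \alpha/d\rceil$ and fix $n\in\mathbb{N}$. Taylor expansion gives
\begin{equation}
Q(n+p^k) = Q(n) + \sum_{j\geq 1} \frac{Q^{(j)}(n)}{j!}\, p^{jk},
\end{equation}
and since $Q\in\mathbb{Z}[x]$ each $Q^{(j)}(n)/j!$ is an integer, so $\nu_p(Q(n+p^k)-Q(n))\geq k$. Thus $\nu_p(Q(n+p^k))=\nu_p(Q(n))$ provided $\nu_p(Q(n))<k$. The remaining case is $\nu_p(Q(n))\geq k$, and here I use the ramification picture: if $\nu_p(Q(n))$ is large then $n$ is $p$-adically close to some root $\theta_i$, and because $K/\mathbb{Q}_p$ is totally ramified of degree $d$, being within distance $\alpha$ (in the sense $\nu_p(Q(n))\geq \alpha+1$ would be needed) is obstructed — in fact the definition of $\alpha$ as the \emph{largest} exponent forces $\nu_p(Q(n))\leq\alpha$ for all integers $n$. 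So I must show: if $k\leq\nu_p(Q(n))\leq\alpha$, then still $\nu_p(Q(n+p^k))=\nu_p(Q(n))$. For this I look at the first derivative term: $\nu_p(Q(n))=\sum_i\nu_p(n-\theta_i)$ and $\nu_p(Q'(n))=\sum_i\sum_{l\neq i}\nu_p(n-\theta_l)$; a short argument with the ultrametric inequality (at most one $\theta_i$ can be close to the integer $n$, since distinct conjugates are at mutual distance bounded by the different) shows $\nu_p(Q(n)) - \nu_p(Q'(n)) = \nu_p(n-\theta_{i_0})$ for the nearest root, and this quantity is at most $\alpha/d < k$ — wait, more carefully, $\nu_p(n-\theta_{i_0})\le \alpha/d$ because the $d$ conjugate distances $\nu_p(n-\theta_i)$ for $i$ near $i_0$ are all equal and sum to at most $\alpha$. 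Hence $\nu_p(Q'(n)) \ge \nu_p(Q(n)) - \alpha/d \ge k - \alpha/d$, and combined with the $jk$ for $j\ge 2$ terms one checks $\nu_p\big(\sum_{j\ge 1}\frac{Q^{(j)}(n)}{j!}p^{jk}\big) = \nu_p(Q'(n)) + k > \nu_p(Q(n))$, giving the claim. This shows $p^k$ is a period, so the minimal period divides $p^k$.

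The third step is minimality: no proper divisor $p^{k-1}$ (hence no smaller power) is a period. Here I evaluate at a well-chosen point, in analogy with the $n=0$ choice in Theorem~\ref{theorem odd prime}. Pick $n_0\in\mathbb{Z}$ with $\nu_p(Q(n_0))=\alpha$ (which exists by definition of $\alpha$). Then I must exhibit some $i<k$ with $\nu_p(Q(n_0+p^i))\neq\alpha$; taking $i=k-1$ and using the same Taylor expansion, $Q(n_0+p^{k-1}) = Q(n_0) + Q'(n_0)p^{k-1}+\cdots$, and since $\nu_p(Q'(n_0))+ (k-1) = \alpha - \nu_p(n_0-\theta_{i_0}) + k - 1$; because $\nu_p(n_0-\theta_{i_0})=\alpha/d$ exactly when $\alpha\equiv 0\pmod d$ and is $\lceil\alpha/d\rceil$-ish otherwise, a case analysis on $\alpha\bmod d$ (exactly the two-case split $\nu\ge k_*$ vs $\nu<k_*$ appearing in the quadratic proof, generalized) shows the $p^{k-1}$-shifted value has strictly smaller valuation, hence $\neq\alpha$. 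Therefore $p^{k-1}$ is not a period and the minimal period is exactly $p^k=p^{\lceil\alpha/d\rceil}$.

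The main obstacle I anticipate is the second step, specifically making rigorous the claim that ``at most one root $\theta_i$ is $p$-adically close to any given integer $n$, and the nearby conjugates contribute equally.'' This requires invoking that distinct roots of an irreducible polynomial over $\mathbb{Q}_p$ are permuted transitively by the Galois group of the splitting field, so $\nu_p(n-\theta_i)$ takes only finitely many values with controlled multiplicities, together with the bound $\nu_p(\theta_i-\theta_j)\le \nu_p(\mathrm{disc}\,Q)/\binom{d}{2}$-type estimates; an alternative, perhaps cleaner, route is to avoid roots entirely and argue purely with the resultant/discriminant of $Q$ and the congruence $Q(n)\equiv Q(n')\pmod{p^k}$ whenever $n\equiv n'\pmod{p^k}$, bootstrapping via Proposition~\ref{proposition Hensel} to rule out $\nu_p(Q(n))>2\nu_p(Q'(n))$ (which would force a root in $\mathbb{Z}_p$, contradicting irreducibility with $d\ge 2$). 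This last observation — that irreducibility plus $d\ge 2$ gives $\nu_p(Q(n))\le 2\nu_p(Q'(n))$ for every integer $n$ — is probably the cleanest lever, and I would try to build the whole of step two on it rather than on explicit root estimates.
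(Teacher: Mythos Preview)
Your overall strategy---pass to the roots of $Q$ in an extension and control $\nu_p(Q(n))$ via distances $\nu_p(n-\theta_i)$---is close in spirit to the paper's, but there is a conceptual error that breaks the argument as written. You repeatedly invoke the picture ``at most one root $\theta_{i_0}$ is close to $n$, the others are far,'' and you try to isolate $\nu_p(n-\theta_{i_0})$ from $\nu_p(Q(n))-\nu_p(Q'(n))$. This is exactly backwards: since $Q$ is irreducible over $\mathbb{Q}_p$, the roots $\theta_1,\dots,\theta_d$ are a single Galois orbit over $\mathbb{Q}_p$, and the (unique) extension of $\nu_p$ to the splitting field is Galois-invariant; hence for $n\in\mathbb{Z}$ one has $\nu_p(n-\theta_i)=\nu_p(n-\theta_j)$ for \emph{all} $i,j$, so every root is at the \emph{same} distance from $n$, namely $\nu_p(Q(n))/d$. (Your side remark that the extension is totally ramified is also false in general---e.g.\ $x^2+1$ over $\mathbb{Z}_3$---but fortunately irrelevant once you use Galois invariance.) With the wrong picture, your derivative estimate $\nu_p(Q(n))-\nu_p(Q'(n))=\nu_p(n-\theta_{i_0})$ is not valid: $Q'(n)$ is a sum of $d$ terms all of the same valuation $(d-1)\nu_p(Q(n))/d$, so one only gets $\nu_p(Q'(n))\ge (d-1)\nu_p(Q(n))/d$, and your strict inequality $\nu_p(Q'(n))+k>\nu_p(Q(n))$ can fail in the boundary case $\nu_p(Q(n))=\alpha$, $d\mid\alpha$. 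The Hensel-based alternative you propose at the end, $\nu_p(Q(n))\le 2\nu_p(Q'(n))$, runs into the same boundary obstruction for $d=2$.

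The paper avoids all of this by packaging the equidistance fact as $|n-r|_\pi=|Q(n)|_\pi^{1/d}$ (via the norm formula for the extended absolute value) and then comparing $n$ not to a root but to a fixed \emph{integer} $n_0$ with $\nu_p(Q(n_0))=\alpha$: the ultrametric inequality applied to $|n-r|\le\max(|n-n_0|,|n_0-r|)$ yields the closed formula
\[
\nu_p(Q(n))=\begin{cases} d\,\nu_p(n-n_0)&\text{if }n\not\equiv n_0\bmod p^{\lfloor\alpha/d\rfloor+1},\\ \alpha&\text{otherwise},\end{cases}
\]
from which both periodicity with period $p^{\lceil\alpha/d\rceil}$ and minimality are read off in a couple of lines, with the $d\mid\alpha$ case handled by the observation that the two branches of the formula agree on the overlap. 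Your Taylor-expansion route can be repaired (use the equidistance to bound every term $\frac{Q^{(j)}(n)}{j!}p^{jk}$ by $(d-j)\nu_p(Q(n))/d+jk\ge\nu_p(Q(n))$, and invoke maximality of $\alpha$ for the boundary case), but the paper's argument is both shorter and yields the explicit valuation formula as a dividend.
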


In fact a more general result, where $\mathbb{Q}$ is replaced with an arbitrary number field, can be proved in a similar way.
Let $K$ be a number field with corresponding number ring $R$.  Choose a prime ideal $\mathfrak{p}$ of 
$R$  and let $\nu:K^\times \to \mathbb{Z}$ be the corresponding valuation.  Let $K_{\mathfrak{p}}$ be the completion of 
$K$ with respect to $\nu$ and let $\mathcal{O}=\{a \in K_{\mathfrak{p}} : \nu(a)\geq 0\}$ be the closed unit ball. 
Let $\pi \in \mathcal{O}$ be a uniformizer, that is, $\nu(\pi)=1$, and relabel the valuation as $\nu_{\pi}$.

\begin{theorem}
\label{theorem period}
Let $Q\in \mathbb{Z}[x]$ be a monic polynomial of degree $d\geq 2$, irreducible over $\mathcal{O}$.  Let $\alpha \geq 0$ be 
the largest non-negative integer such that $Q(x) \equiv 0 \mod \pi^\alpha$ for some $x \in \mathbb{Z}$.  Suppose the residue field $\mathcal{O}/\pi\mathcal{O}$ has characteristic $p$, and let
$e=\nu_{\pi}(p)$ be the ramification index.  Then $\{\nu_{\pi}(Q(n)) : n \in \mathbb{N}\}$ is periodic with period 
length $p^{\lceil \alpha/(e d) \rceil}$.
\end{theorem}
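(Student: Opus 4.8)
The plan is to prove Theorem~\ref{theorem period} by passing to a root of $Q$ over $K_{\mathfrak{p}}$; this collapses the whole statement into a single valuation together with one ultrametric estimate. First I would record that, since $Q$ is monic, irreducible over $\mathcal{O}$, and of degree $d \geq 2$, it has no root in $\mathcal{O}$ (a root $\beta$ would give a factor $x - \beta$ in $\mathcal{O}[x]$). The residue field $\mathcal{O}/\pi\mathcal{O} \cong R/\mathfrak{p}$ is finite, so $\mathcal{O}$ is compact, and the compactness argument in the proof of Theorem~\ref{theorem criteria} applies with $\mathbb{Z}_p$ replaced by $\mathcal{O}$: it shows that $\{\nu_{\pi}(Q(n)) : n \in \mathbb{Z}\}$ is a bounded set of non-negative integers. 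Hence its maximum is exactly $\alpha$, attained at some $n_0 \in \mathbb{Z}$.

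Next I would bring in a root. Let $\theta \in \overline{K_{\mathfrak{p}}}$ satisfy $Q(\theta) = 0$, so $Q(x) = \prod_{i=1}^{d}(x - \theta_i)$ over the conjugates $\theta_i$ of $\theta$ (distinct, since $\operatorname{char} K_{\mathfrak{p}} = 0$). Because $K_{\mathfrak{p}}$ is complete, $\nu_{\pi}$ extends uniquely to $\overline{K_{\mathfrak{p}}}$, and the extension is $\operatorname{Gal}(\overline{K_{\mathfrak{p}}}/K_{\mathfrak{p}})$-invariant; as the $\theta_i$ form one Galois orbit, $\nu_{\pi}(n - \theta_i)$ is independent of $i$, so
\[
\nu_{\pi}(Q(n)) = d\,\nu_{\pi}(n - \theta) \qquad (n \in \mathbb{Z}).
\]
Set $\rho := \max_{n \in \mathbb{Z}} \nu_{\pi}(n - \theta)$, a non-negative rational number (it need not be an integer, as $\theta$ may generate a ramified extension, but only its size is used below), so that $\alpha = d\rho$.

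Then I would put $s := \lceil \alpha/(ed) \rceil = \lceil \rho/e \rceil$, whence $\nu_{\pi}(p^{s}) = es \geq \rho$. For $n \in \mathbb{Z}$, if $\nu_{\pi}(n - \theta) < es$ the strict ultrametric inequality gives $\nu_{\pi}(n + p^{s} - \theta) = \nu_{\pi}(n - \theta)$; if instead $\nu_{\pi}(n - \theta) = es$ (which forces $es = \rho$), then $\nu_{\pi}(n + p^{s} - \theta) \geq \rho$, while applying the displayed identity to the integer $n + p^{s}$ together with the maximality of $\alpha$ gives $\nu_{\pi}(n + p^{s} - \theta) = \tfrac1d \nu_{\pi}(Q(n + p^{s})) \leq \tfrac{\alpha}{d} = \rho$, so equality holds again. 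In every case $\nu_{\pi}(Q(n + p^{s})) = \nu_{\pi}(Q(n))$, so the sequence is periodic with period $p^{s}$. For minimality, assume $s \geq 1$ (the case $s = 0$ being trivial) and take $n_0$ with $\nu_{\pi}(n_0 - \theta) = \rho$; since $s - 1 < \rho/e$, we get $\nu_{\pi}(p^{s-1}) = e(s-1) < \rho$, hence $\nu_{\pi}\bigl((n_0 - p^{s-1}) - \theta\bigr) = e(s-1)$ and
\[
\nu_{\pi}\bigl(Q(n_0 - p^{s-1})\bigr) = d e (s-1) < d\rho = \alpha = \nu_{\pi}(Q(n_0)) = \nu_{\pi}\bigl(Q((n_0 - p^{s-1}) + p^{s-1})\bigr),
\]
so $p^{s-1}$ is not a period; since the minimal period divides $p^{s}$, it equals $p^{s} = p^{\lceil \alpha/(ed)\rceil}$.

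The only real obstacle I expect is the boundary case $es = \rho$, equivalently $ed \mid \alpha$, where the ultrametric estimate fails to be strict and one must invoke the maximality of $\alpha$ rather than arithmetic with the root — exactly the situation where the quadratic treatment in Section~\ref{section quadratic odd} required a separate case when $\nu_p(n)$ reached the critical value. Once the identity $\nu_{\pi}(Q(n)) = d\,\nu_{\pi}(n - \theta)$ is in hand, everything else is bookkeeping.
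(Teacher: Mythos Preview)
Your argument is correct, and the core device---the identity $\nu_{\pi}(Q(n)) = d\,\nu_{\pi}(n-\theta)$ obtained from the unique extension of $\nu_{\pi}$ to a splitting field and the transitivity of the Galois action on the roots---is exactly the engine behind the paper's Theorem~\ref{theorem valuations}. Where you differ is in organization: the paper first packages that identity into an explicit two-case formula for $\nu_{\pi}(Q(n))$ in terms of $\nu_{\pi}(n-n_0)$ (Theorem~\ref{theorem valuations}), uses that formula to compute the period in the rational case $\pi = p$ (Theorem~\ref{theorem period rational}), and then deduces Theorem~\ref{theorem period} by the one-line observation that for $n\in\mathbb{Z}$ one has $\nu_{\pi}(Q(n)) = e\,\nu_{p}(Q(n))$, so the two sequences share the same period and $\alpha = e\alpha_1$. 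You instead work directly with $\nu_{\pi}(n-\theta)$ and a single ultrametric estimate, handling the boundary case $es=\rho$ by appealing to the maximality of $\alpha$. Your route is more compressed and avoids both the case split of Theorem~\ref{theorem valuations} and the reduction to the rational prime; the paper's route has the advantage of producing the explicit valuation formula along the way, which is used independently in the examples of Section~\ref{section examples}. One small point worth making explicit in your write-up: since $\nu_{\pi}(n-\theta)\le\rho\le es$ for all $n$, the two cases $\nu_{\pi}(n-\theta)<es$ and $\nu_{\pi}(n-\theta)=es$ really are exhaustive; and for the minimality step you should remark that $n_0$ can be taken in $\mathbb{N}$ (shift by a multiple of $p^{s}$) so that both $n_0$ and $n_0-p^{s-1}$ lie in the index set.
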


The proofs of Theorems~\ref{theorem period rational} and \ref{theorem period} are based on an expression for the valuation $\nu_{\pi}(Q(n))$. 

\begin{theorem}
\label{theorem valuations}
Let $A\subseteq \mathcal{O}$ be a subring and let $Q(x)\in A[x]$ be monic polynomial of degree $d\geq 2$, irreducible over $\mathcal{O}$.  
Let  $\alpha \geq 0$ be the largest non-negative integer such that $Q(x) \equiv 0 \mod \pi^\alpha$ for some $x \in A$.  
Choose $n_0\in A$ such that $Q(n_0)\equiv 0 \mod \pi^{\alpha}$.  Then, for $n\in A$, 
\[
	\nu_{\pi}(Q(n)) =
	\begin{cases}
		d \, \nu_{\pi}(n - n_0)		& \text{if $n \not\equiv n_0 \mod \pi^{\lfloor \alpha/d \rfloor + 1}$} \\
		\alpha				& \text{if $n\equiv n_0 \mod \pi^{\lfloor \alpha/d \rfloor + 1}$}.
	\end{cases}
\]
\end{theorem}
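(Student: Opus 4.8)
The plan is to factor $Q$ over a finite extension of $K_{\mathfrak{p}}$ and read off $\nu_{\pi}(Q(n))$ from a norm. Since $\mathcal{O}$ is a discrete valuation ring, hence integrally closed, Gauss's lemma shows that the monic polynomial $Q$, being irreducible over $\mathcal{O}$, is irreducible over the fraction field $K_{\mathfrak{p}}$. Fix a root $\theta$ of $Q$, set $K' = K_{\mathfrak{p}}(\theta)$ (a field of degree $d$ over $K_{\mathfrak{p}}$), and let $w$ be the unique extension of $\nu_{\pi}$ to $K'$, which exists and is unique because $K_{\mathfrak{p}}$ is complete. The conjugates $\beta_1,\dots,\beta_d$ of $\theta$ are the images of $\theta$ under the $d$ embeddings $K' \hookrightarrow \overline{K_{\mathfrak{p}}}$, and they are the roots of $Q$; hence for $n \in A \subseteq \mathcal{O} \subseteq K_{\mathfrak{p}}$,
\[
N_{K'/K_{\mathfrak{p}}}(n - \theta) = \prod_{i=1}^{d} (n - \beta_i) = Q(n).
\]
Using the standard identity $\nu_{\pi}\big(N_{K'/K_{\mathfrak{p}}}(y)\big) = d\, w(y)$ for $y \in (K')^{\times}$ (a consequence of the Galois-invariance of the unique extension of $\nu_{\pi}$ to an algebraic closure), this yields the key formula
\[
\nu_{\pi}(Q(n)) = d\, w(n - \theta), \qquad n \in A.
\]

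Next I would note that $\alpha$ is finite: since $d \geq 2$ and $Q$ is irreducible, $Q$ has no root in $\mathcal{O}$, so by the compactness argument used in the proof of Theorem~\ref{theorem criteria} the quantity $\nu_{\pi}(Q(n))$ is bounded on $\mathcal{O} \supseteq A$. Therefore $\nu_{\pi}(Q(n)) \leq \alpha$ for every $n \in A$, and $\nu_{\pi}(Q(n_0)) = \alpha$ because $\nu_{\pi}(Q(n_0)) \geq \alpha$ by the choice of $n_0$. Applying the key formula at $n = n_0$ then gives $d\, w(n_0 - \theta) = \alpha$, that is, $w(n_0 - \theta) = \alpha/d$.

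Now write $n - \theta = (n - n_0) - (\theta - n_0)$ and put $m = \nu_{\pi}(n - n_0) = w(n - n_0)$, an integer (or $+\infty$ when $n = n_0$). The key formula becomes $\nu_{\pi}(Q(n)) = d\, w\big((n - n_0) - (\theta - n_0)\big)$, and I would finish with the ultrametric inequality applied to the terms $n - n_0$ and $\theta - n_0$, whose $w$-values are $m$ and $\alpha/d$. If $m > \alpha/d$, the difference has $w$-value $\alpha/d$, so $\nu_{\pi}(Q(n)) = \alpha$; if $m < \alpha/d$, it has $w$-value $m$, so $\nu_{\pi}(Q(n)) = d m = d\,\nu_{\pi}(n - n_0)$; and if $m = \alpha/d$ (which forces $d \mid \alpha$), the ultrametric inequality gives only $\nu_{\pi}(Q(n)) \geq \alpha$, but combined with $\nu_{\pi}(Q(n)) \leq \alpha$ this forces $\nu_{\pi}(Q(n)) = \alpha = d m = d\,\nu_{\pi}(n - n_0)$. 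It remains to rephrase the trichotomy in terms of the stated congruence: since $m \in \mathbb{Z} \cup \{+\infty\}$, the condition $m \leq \alpha/d$ is equivalent to $m \leq \lfloor \alpha/d \rfloor$, i.e. to $n \not\equiv n_0 \bmod \pi^{\lfloor \alpha/d \rfloor + 1}$, and throughout that range the value is $d\,\nu_{\pi}(n - n_0)$ (which equals $\alpha$ precisely when $m = \alpha/d$); while $n \equiv n_0 \bmod \pi^{\lfloor \alpha/d \rfloor + 1}$ is exactly the condition $m > \alpha/d$, giving the value $\alpha$.

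The one delicate point is the boundary case $\nu_{\pi}(n - n_0) = \alpha/d$, possible only when $d \mid \alpha$: there the ultrametric inequality is not strict, and one genuinely needs the maximality of $\alpha$ — not merely valuation bookkeeping — to pin down $\nu_{\pi}(Q(n))$. Everything else is standard input on valuations of complete fields: uniqueness of the extension, its Galois-invariance, the norm formula $\nu_{\pi}(N(y)) = d\, w(y)$, and Gauss's lemma for monic polynomials over a discrete valuation ring.
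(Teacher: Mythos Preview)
Your proof is correct and follows essentially the same route as the paper's: both extend the valuation to $K_{\mathfrak{p}}(\theta)$, identify $Q(n)$ with $N_{K'/K_{\mathfrak{p}}}(n-\theta)$ to obtain $\nu_{\pi}(Q(n)) = d\,w(n-\theta)$, evaluate at $n_0$ to get $w(n_0-\theta)=\alpha/d$, and then split into cases via the ultrametric inequality, invoking the maximality of $\alpha$ in the boundary case $\nu_{\pi}(n-n_0)=\alpha/d$. The only differences are cosmetic---the paper works with the absolute value $|\cdot|_{\pi}$ rather than the valuation $w$, and organizes the case split around the congruences rather than the trichotomy $m \lessgtr \alpha/d$---while you add a few justifications (Gauss's lemma, finiteness of $\alpha$) that the paper leaves implicit.
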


\begin{proof}
Define the absolute value $|\cdot|_{\pi}$ on $K$ as $|a|_{\pi} = q^{-\nu_{\pi}(a)}$, where $|\mathcal{O}/\pi\mathcal{O}|=q$.  Write
\begin{equation}
Q(x)=(x-r_1)(x-r_2)\cdots(x-r_d)
\end{equation}
over a splitting field for $Q$.  Let $r=r_1$ and define $E=K_{\mathfrak{p}}(r)$.  Then $E/K_{\mathfrak{p}}$ is a field extension of degree $d$ and the $\pi$-adic absolute value extends to $E$ by
\begin{equation}
|s|_{\pi}=|\norm_{E/K_{\mathfrak{p}}}(s)|_{\pi}^{1/d}.
\end{equation}
The norm of an element $s\in E$ is
\begin{equation}
\norm_{E/K_{\mathfrak{p}}}(s)=(-1)^{ml}a_0^l,
\end{equation}
where $x^m+a_{m-1}x^{m-1}+\cdots+a_1x+a_0$ is the minimal polynomial of $s$ over $K_{\mathfrak{p}}$ and $l$ is the degree of the extension of $E/K_{\mathfrak{p}}(s)$.

For every element $n \in A$, the minimal polynomial of $n - r$ is 
\begin{equation}
(x-(n - r_1))(x-(n - r_2))\cdots (x - (n - r_d)).
\end{equation}
Therefore
\begin{eqnarray*}
|n - r|_{\pi}&=& |(n - r_1)\cdots(n - r_d)|_{\pi}^{1/d}\\
&=&|Q(n)|_{\pi}^{1/d}\\
&=&\left(q^{-\nu_{\pi}(Q(n))}\right)^{1/d}.
\end{eqnarray*}
This gives
\begin{equation}
\nu_{\pi}(Q(n))=-d\log_q|n - r|_{\pi}
\end{equation}
(where $\log_{q}$ is the real logarithm to base $q$).  Now take any $n_0\in A$ such that $Q(n_0)\equiv 0 \mod q^{\alpha}$.  Then
\begin{eqnarray*}
|n - r|_{\pi} &\leq& \max\left(|n - n_0|_{\pi},|n_0-r|_{\pi}\right)\\
&=& \max\left(|n - n_0|_{\pi},q^{-\nu_{\pi}(Q(n_0))/d}\right)\\
&=& \max\left(|n - n_0|_{\pi},q^{-\alpha/d}\right)
\end{eqnarray*}
with equality if $|n - n_0|_{\pi} \neq q^{-\alpha/d}$.  The computation of $\nu_{\pi}(Q(n))$ from this equation is divided into three cases.
Define $\beta = \lfloor \alpha/d \rfloor$.

\medskip

\noindent
{\em Case 1:}   If $n\equiv n_0 \mod \pi^{\beta + 1}$, then $\nu_{\pi}(n - n_0)\geq \beta + 1 > \alpha/d$, and it follows that
\begin{equation}
|n - n_0|_{\pi} = q^{-\nu_{\pi}(n - n_0)}<q^{-\alpha/d}.
\end{equation}
Then $|n - r|_{\pi}=q^{-\alpha/d}$ and
\begin{equation}
\nu_{\pi}(Q(n))=-d\log_{q}|n - r|_{\pi} = \alpha.
\end{equation}

\medskip

\noindent
{\em Case 2:}  If $n\not\equiv n_0 \mod \pi^{\beta}$, then $\nu_{\pi}(n - n_0) < \beta \leq \alpha/d$, and
\begin{equation}
|n - n_0|_{\pi} = q^{-\nu_{\pi}(n - n_0)}>q^{-\alpha/d}.
\end{equation}
In this case, $|n - r|_{\pi}=|n - n_0|_{\pi}$ and
\begin{equation}
\nu_{\pi}(Q(n))=-d\log_q|n - r|_{\pi} = d \, \nu_{\pi}(n - n_0),
\end{equation}
as claimed.

\medskip

\noindent
{\em Case 3:}  The final case is $n\equiv n_0 \mod \pi^{\beta}$ and $n\not\equiv n_0 \mod \pi^{\beta + 1}$.
Then $\nu_{\pi}(n - n_0)=\beta$ and
\begin{equation}
|n - n_0|_{\pi} = q^{-\nu_{\pi}(n - n_0)} = q^{-\beta}.
\end{equation}
If $\alpha/d$ is not an integer, this implies that $|n - n_0|_{\pi}>q^{-\alpha/d}$, so that $|n - r|_{\pi} = |n - n_0|_{\pi}$ as in Case~2, and
\begin{equation}
\nu_{\pi}(Q(n)) =-d\log_q|n - r|_{\pi} = d \, \nu_{\pi}(n - n_0).
\end{equation}
On the other hand, if $\alpha/d$ is an integer then $|n - n_0|_{\pi}=q^{-\alpha/d}$.  In this case, $|n - r|_{\pi}\leq q^{-\alpha/d}$ and
\begin{equation}
\nu_{\pi}(Q(n)) = -d\log_{q}|n - r|_{\pi} \geq \alpha.
\end{equation}
Since $\nu_{\pi}(Q(n))\leq \alpha$ for all $n\in A$, it follows that $\nu_{\pi}(Q(n))=\alpha = d \, \nu_{\pi}(n - n_0)$, as claimed.
\end{proof}

The proof of Theorem~\ref{theorem period rational} is presented next, followed by the proof of Theorem~\ref{theorem period}.

\begin{proof}[Proof of Theorem~\ref{theorem period rational}]
Let $n_{0} \in \mathbb{Z}$ with $Q(n_{0}) \equiv 0 \mod p^{\alpha}$ and define 
$\beta = \lfloor \alpha/d \rfloor$. Assume first that $\alpha/d \not \in \mathbb{Z}$.
In the rational case, i.e.\ $\pi=p$, Theorem~\ref{theorem valuations} shows that 
$\nu_{p}(Q(n))$ depends only on the residue of $n$ modulo 
$p^{\beta+1}$. Therefore the period length of $V_{p}(Q)$ is at most 
$p^{\beta +1}$. Since $\alpha/d$ is not an integer and 
\begin{equation}
\nu_{p}(Q( n_{0}+p^{\beta})) = d \, \nu_p(p^\beta) =
d \, \beta \neq \alpha = \nu_{p}(Q(n_{0})),
\end{equation}
the period length is not $p^{\beta}$; therefore the period length is $p^{\beta + 1} = p^{\lceil \alpha/d \rceil}$ as claimed.

In the case $\alpha/d \in \mathbb{Z}$ (equal to $\beta$),
Theorem~\ref{theorem valuations} gives 
\[
	\nu_p(Q(n)) =
	\begin{cases}
		d \, \nu_p(n - n_0)	& \text{if $n \not\equiv n_0 \mod p^{\beta + 1}$} \\
		\alpha			& \text{if $n\equiv n_0 \mod p^{\beta + 1}$}.
	\end{cases}
\]
If $n \equiv n_0 \mod p^\beta$ and $n \not\equiv n_0 \mod p^{\beta + 1}$, then $d \, \nu_p(n - n_0) = \alpha$ and one can move this case to obtain
\[
	\nu_p(Q(n)) =
	\begin{cases}
		d \, \nu_{p}(n - n_{0}) & \text{ if } n \not \equiv n_{0} \mod p^{\beta}  \\
		\alpha & \text{ if } n  \equiv n_{0} \mod p^{\beta}.
	\end{cases}
\]
It follows that  the period length of $V_{p}(Q)$ is at most $p^{\beta}$. Since
\begin{equation*}
\nu_{p}(Q(n_{0} + p^{\beta-1})) = d \, \nu_{p}( p^{\beta-1}) = 
d \, ( \beta -1) \neq \alpha = \nu_{p}(Q(n_{0})),
\end{equation*}
the period length is not $p^{\beta - 1}$; therefore the period length is $p^\beta = p^{\lceil \alpha/d \rceil}$.
\end{proof}

The general case follows from the proof of Theorem~\ref{theorem period rational}.

\begin{proof}[Proof of Theorem~\ref{theorem period}]
Let $m$ be an integer.   Since $\mathfrak{p}\cap \mathbb{Z} = p\mathbb{Z}$, then  $\pi$ divides $Q(m)$ if and only if $p$ divides $Q(m)$ 
(since the coefficients of $Q(x)$ are rational integers). Therefore the sequences $\{\nu_{\pi}(Q(n)) : n \in \mathbb{N}\}$ and $\{\nu_{p}(Q(n)) : n \in \mathbb{N}\}$ have the same period length.  Let $\alpha_1$ be the largest non-negative integer such that $Q(x)\equiv 0 \mod p^{\alpha_1}$ has a solution in $\mathbb{Z}$.
Similarly, let $\alpha$ be the largest non-negative integer such that $Q(x)\equiv 0 \mod \pi^\alpha$ has a solution in $\mathbb{Z}$.  From the proof of Theorem~\ref{theorem period rational} one knows that the period length of these sequences is $p^{\lceil \alpha_1/d \rceil}$.  However, it follows from $e=\nu_{\pi}(p)$ 
that $\alpha = e \alpha_1$. Thus, the period length is given by
\begin{equation}
p^{\lceil \alpha_1/d \rceil}=p^{\lceil \alpha/(e d) \rceil}.
\end{equation}
This concludes the proof.
\end{proof}

%%%%%%%%%%%%%%%%%%%%%%%%%%%%%%%%%%%%%%%%%%
% Section: A collection of examples
%%%%%%%%%%%%%%%%%%%%%%%%%%%%%%%%%%%%%%%%%%

\section{A collection of examples}
\label{section examples}

This final section presents some examples that illustrate the preceding theorems.

\begin{example}
Let $Q(x)=x^3+9 x^2+81 x+243$ and $p = 3$.  Dumas' criterion shows that $Q(x)$ is irreducible 
over $\mathbb{Z}_3$.  A direct calculation yields  $\alpha = 5$, i.e.\ $Q(x)\equiv 0 \mod 3^5$ has solutions, but 
$Q(x)\equiv 0 \mod 3^6$ does not.  Theorem~\ref{theorem period rational} implies that $V_3(Q)$ is periodic with period length $9$.
The explicit $3$-adic valuation of $Q(n)$ for $n\in \mathbb{Z}$ is provided by Theorem~\ref{theorem valuations}.  In this case, 
$\beta=1$ and choosing $n_0 = 0$ gives 
$$\nu_3(Q(n))=\begin{cases}
 0 & \text{if } n \not\equiv 0 \mod 3 \\
 3 &  \text{if } n \equiv 3,6 \mod 9 \\
 5 & \text{if } n \equiv 0 \mod 9.
\end{cases}
$$
Therefore the fundamental period of $V_3(Q)$ is given by $5, 0, 0, 3, 0, 0, 3, 0, 0$.
\end{example}

The next example offers an interesting twist, using the periodicity of $V_{p}(Q)$ to determine the reducibility of a 
polynomial $Q$.

\begin{example}
Take $Q(x) = x^{4} + x^{3} + x^{2} + 3x+3 \in \mathbb{Z}_{3}[x]$ and check $\alpha = 3$.
Suppose $Q$ is irreducible in $\mathbb{Z}_{3}[x]$.
Theorem~\ref{theorem period rational} then implies that $V_{3}(Q)$ is periodic with period length $3$. But $V_{3}(Q) = \{ 1, 2, 0, 1, 3, 0, \dots \}$ does not have period length $3$; this contradicts the assumption, and therefore $Q$ is reducible.  Now $Q(x) \equiv x^{2} (x+2)^{2} \mod 3$
and Hensel's lemma implies that $Q$ factors in the form 
\begin{equation}
Q(x) = (x^{2} + \gamma_{1}x+ \gamma_{0}) (x^{2} + \beta_{1}x+ \beta_{0})
\end{equation}
with $\gamma_{j}, \, \beta_{j} \in \mathbb{Z}_{3}$.  The polynomials are chosen so that 
\begin{equation}
x^{2} + \gamma_{1}x + \gamma_{2} \equiv x^{2} \mod 3  \text{ and }
x^{2} + \beta_{1}x + \beta_{2} \equiv x^{2}  + x + 1 \mod 3.
\end{equation}
A direct application of Hensel's lemma gives the expansions 
\begin{eqnarray*}
\gamma_{0} & = & p + p^{2} + p^{3} + 2p^{4} + 2p^{7} + 2p^{9} + \cdots \\
\gamma_{1} & = & 2p^{2} + 2p^{3} + p^{4} + p^{7} + 2p^{8} + \cdots \\
\beta_{0} & = & 1 + 2p + 2p^{2} + p^{3} + 2p^{4} + p^{5} + p^{6} + p^{7} + \cdots \\
\beta_{1} & = & 1 + p^{2} + p^{4} + 2p^{5} + 2p^{6} + p^{7} + 2p^{9} + \cdots,
\end{eqnarray*}
with $p=3$. The reader can now check that $V_{3}(Q_{1})$ has  period length $3$ and $V_{3}(Q_{2})$ has period length $9$.  It follows 
that $V_{3}(Q)$ is periodic with period length $9$, and the fundamental period is $1, 2, 0, 1, 3, 0, 1, 2, 0$.
\end{example}

The final examples show how Theorem~\ref{theorem period} and Theorem~\ref{theorem valuations} work in the more general setting of a number field.

\begin{example}
Consider the number field $K=\mathbb{Q}(\sqrt[3]{2})$ and its ring of integers $\mathbb{Z}[\sqrt[3]{2}]$.  Choose the prime ideal 
$\mathfrak{p}=(\sqrt[3]{2})$ and $\pi=\sqrt[3]{2}$.   Observe that $\pi$ lies above the rational prime 2. Let $Q(x)=x^2-384=x^2 - 2^7\cdot3$.  
In this case, $\alpha=21$, $\beta=10$, and $n_0=0$.  Theorem~\ref{theorem valuations} implies that
\begin{equation}
\label{exgenval}
\nu_{\pi}(Q(n)) = \begin{cases}
 2 \, \nu_{\pi}(n) & \text{if } n\not\equiv 0 \mod \pi^{11} \\
 21 & \text{if } n\equiv 0 \mod \pi^{11}.
\end{cases}
\end{equation}
Using the fact that $e=\nu_{\pi}(2)=3$, then equation~\eqref{exgenval} can be written as
\begin{equation}
\nu_{\pi}(Q(n)) = \begin{cases}
0 & \text{if } n\equiv 1 \mod 2 \\
6 & \text{if } n\equiv 2 \mod 4 \\
12 & \text{if } n\equiv 4 \mod 8 \\
18 & \text{if } n\equiv 8 \mod 16 \\
21 & \text{if } n\equiv 0 \mod 16.
\end{cases}
\end{equation}
Finally, Theorem~\ref{theorem period} implies that the period length is given by $2^{\lceil \alpha/(e d) \rceil}=2^{\lceil 21/6\rceil} = 16$.  
Indeed, the fundamental period of this sequence is
$$21, 0, 6, 0, 12, 0, 6, 0, 18, 0, 6, 0, 12, 0, 6, 0.$$
The fundamental period of $V_2(Q)$ is
\[
	7, 0, 2, 0, 4, 0, 2, 0, 6, 0, 2, 0, 4, 0, 2, 0.
\]
\end{example}

The next example deals with the case when the values of $n$ are chosen from a subring $A$ different from $\mathbb{Z}$.

\begin{example}
Consider the same number field $K=\mathbb{Q}(\sqrt[3]{2})$, but now choose the prime ideal $\mathfrak{p}=(1+\sqrt[3]{2})$ with 
uniformizer $\pi=1+\sqrt[3]{2}$.   Note that
$$\pi^3 = 3+3 \sqrt[3]{2}+3(\sqrt[3]{2})^{2} =3(1+\sqrt[3]{2}+\sqrt[3]{4}).$$
Let $u=1+\sqrt[3]{2}+\sqrt[3]{4}$ and $v=\sqrt[3]{2}-1$ and observe that 
$$uv=(1+\sqrt[3]{2}+\sqrt[3]{4})(\sqrt[3]{2}-1)=1,$$
thus $u$ and $v$ are units.  This implies that $3=v\pi^3$ and therefore $\mathcal{O}/\pi\mathcal{O} = \mathbb{F}_3$.

Consider the polynomial $Q(x)=x^2 - \pi^5 \in (\mathbb{Z}[\sqrt[3]{2}])[x]$.  
Observe that in this case, the subring $A\subseteq \mathcal{O}$ from Theorem~\ref{theorem valuations} is $A=\mathbb{Z}[\sqrt[3]{2}]$.  For this polynomial, 
$\alpha=5, \beta=2$, and $n_0=0$.  Theorem~\ref{theorem valuations} implies
\begin{equation}
\nu_{\pi}(Q(n)) = \begin{cases}
2 \, \nu_{\pi}(n) & \text{if } n\not\equiv 0 \mod \pi^{3} \\
 5 & \text{if } n\equiv 0 \mod \pi^{3}.
\end{cases}
\end{equation}
This equation can be simplified to 
\begin{equation}
\nu_{\pi}(Q(n)) = \begin{cases}
0& \text{if } n\equiv 1,2  \mod \pi \\
2& \text{if } n\equiv \pi,2\pi \mod \pi^2\\
 4 & \text{if } n\equiv \pi^2,2\pi^2 \mod \pi^{3} \\
 5 & \text{if } n\equiv 0  \mod \pi^{3}.
\end{cases}
\end{equation}
If $n$ is restricted to the subring $\mathbb{Z}\subseteq A$, then
\begin{equation}
\nu_{\pi}(Q(n)) = \begin{cases}
0& \text{if } n\equiv 1,2  \mod 3 \\
5 & \text{if } n\equiv 0  \mod 3.
\end{cases}
\end{equation}
In this case, the period length is clearly 3.
\end{example}

%%%%%%%%%%%%%%%%%%%%%%%%%%%%%%%%%%%%%%%%%%
% Acknowledgments
%%%%%%%%%%%%%%%%%%%%%%%%%%%%%%%%%%%%%%%%%%

\section*{Acknowledgments}

The authors thank the referee for helpful suggestions.
The first author acknowledges the partial support of UPR-FIPI 1890015.00. The second 
author acknowledges the partial support of NSF-DMS 1112656.  The last author was partially supported by a Marie Curie 
Actions COFUND Fellowship.

%%%%%%%%%%%%%%%%%%%%%%%%%%%%%%%%%%%%%%%%%%
% Bibliography
%%%%%%%%%%%%%%%%%%%%%%%%%%%%%%%%%%%%%%%%%%

\end{document}